\pgfplotsset{compat=1.18}
\theoremstyle{plain}
\newtheorem{theorem}{Theorem}
\newtheorem{lemma}[theorem]{Lemma}
\newtheorem{corollary}[theorem]{Corollary}
\theoremstyle{remark}
\theoremstyle{definition}
\DeclareMathOperator{\fl}{fl}
\DeclareMathOperator{\diag}{diag}
\DeclareMathOperator{\offdiag}{offdiag}
\DeclareMathOperator{\dev}{dev}
\DeclareMathOperator{\cof}{cof}
\newcommand{\inner}[2]{\langle #1, #2 \rangle}
\newcommand{\T}{\mathsf{T}}
\newcommand{\epsmach}{\epsilon_\text{mach}}
\newcommand{\ErrorPlot}[2]{
  \centering
  \begin{tikzpicture}
    \begin{loglogaxis}[
        width=7cm,
        height=5cm,
        xlabel={Perturbation parameter $\delta$},
        ylabel={Absolute forward error},
        legend pos=south east,
        tick label style={font=\small},
        label style={font=\small},
        legend style={font=\small, at={(0.5,1.03)}, anchor=south, /tikz/every even column/.append style={column sep=0.5cm}},
        legend columns = 2,
        clip mode=individual
      ]
      \addplot+[red, only marks,mark=triangle] table [x=delta,y=#2_errors_naive] {#1};
      \addlegendentry{naive}

      \addplot+[green, only marks,mark=o] table [x=delta,y=#2_errors_naive_tensor] {#1};
      \addlegendentry{naive tensor}

      \addplot+[blue, only marks,mark=x] table [x=delta,y=#2_errors_c] {#1};
      \addlegendentry{present}

      \addplot[black, very thick] table [x=delta,y=#2_conds] {#1};
      \addlegendentry{stability bound}
    \end{loglogaxis}
  \end{tikzpicture}
}
\newcommand{\ErrorPlotEigvals}[4]{
    \centering
    \begin{tikzpicture}
        \begin{loglogaxis}[
                width=7cm,
                height=5cm,
                xlabel={Perturbation parameter $\delta$},
                ylabel={Absolute forward error},
                legend pos=south east,
                tick label style={font=\small},
                label style={font=\small},
                legend style={font=\small, at={(0.5,1.03)}, anchor=south, /tikz/every even column/.append style={column sep=0.5cm}},
                legend columns = 2,
                clip mode=individual,
                ymin=#3,
                ymax=#4
            ]
                \addplot+[blue,only marks,mark=x] table [x=delta,y=#2_errors_c] {#1};
                \addlegendentry{present}

                \addplot+[red, only marks,mark=triangle] table [x=delta,y=#2_errors_naive] {#1};
                \addlegendentry{naive}

                \addplot+[green, only marks,mark=asterisk] table [x=delta,y=#2_errors_lapack] {#1};
                \addlegendentry{LAPACK DGEEV}

                \addplot[black, thick] table [x=delta,y=#2_conds] {#1};
                \addlegendentry{stability bound}
        \end{loglogaxis}
    \end{tikzpicture}
}
\begin{document}

\title{Numerically stable evaluation of closed-form expressions
    for eigenvalues of $3 \times 3$ matrices}

\author{
    Michal Habera\thanks{These authors contributed equally to this work.} \\
    Department of Engineering \\
    University of Luxembourg \\
    Esch-sur-Alzette, Luxembourg \\
    \texttt{michal.habera@uni.lu} \\
    \And
    Andreas Zilian\footnotemark[1] \\
    Department of Engineering \\
    University of Luxembourg \\
    Esch-sur-Alzette, Luxembourg \\
    \texttt{andreas.zilian@uni.lu} \\
}

\maketitle

\begin{abstract}
    Trigonometric formulas for eigenvalues of $3 \times 3$ matrices that build on Cardano's and
    Viète's work on algebraic solutions of the cubic are numerically unstable for matrices with
    repeated eigenvalues. This work presents numerically stable, closed-form evaluation of eigenvalues
    of real, diagonalizable $3 \times 3$ matrices via four invariants: the trace $I_1$, the deviatoric
    invariants $J_2$ and $J_3$, and the discriminant $\Delta$. We analyze the conditioning of these
    invariants and derive tight forward error bounds. For $J_2$ we propose an algorithm and prove its
    accuracy. We benchmark all invariants and the resulting eigenvalue formulas, relating observed
    forward errors to the derived bounds. In particular, we show that, for the special case of
    matrices with a well-conditioned eigenbasis, the newly proposed algorithms have errors within the
    forward stability bounds. Performance benchmarks show that the proposed algorithm is approximately
    ten times faster than the highly optimized LAPACK library for a challenging test case, while
    maintaining comparable accuracy.
\end{abstract}

\keywords{eigenvalues, 3x3 matrices, numerical stability, matrix invariants, discriminant}

\section{Introduction and motivation}

The classical textbook formulas for closed-form expressions of eigenvalues of a diagonalizable
matrix $\mathbf A \in \mathbb R^{3 \times 3}$ with real spectrum are based on the trace of the matrix $I_1$,
and two deviatoric matrix invariants $J_2$ and $J_3$,
\begin{equation}
    \begin{aligned}
        I_1(\mathbf A) & \coloneqq \tr (\mathbf A),                                                                                            \\
        J_2(\mathbf A) & \coloneqq -\frac12 \left[\tr(\dev(\mathbf A))^2 - \tr(\dev(\mathbf A)^2)\right] = \frac{1}{2} \tr(\dev(\mathbf A)^2), \\
        J_3(\mathbf A) & \coloneqq \det (\dev(\mathbf A)). \label{eq:principal-invariants}
    \end{aligned}
\end{equation}
The three eigenvalues $\lambda_k$ are then given by
(see \citet{Smith1961Eigenvalues} or \citet[\S 1.6.2.3]{Bronshtein2015Handbook} or \citet[Eq. 5.6.12]{Press2007Recipes}),
\begin{equation}
    \lambda_k = \frac{1}{3} \left(I_1 + 2 \sqrt{3 J_2} \cos \left( \frac{\varphi + 2 \pi k}{3} \right) \right), \quad k \in \{1, 2, 3\},
    \label{eq:eigvals-trig}
\end{equation}
where the triple-angle $\varphi$ is computed as
\begin{equation}
    \varphi \coloneqq \arccos \left( \frac{3 \sqrt{3}}{2} \frac{J_3}{J_2^{3/2}} \right).
\end{equation}

The above expressions are notoriously unstable in finite-precision arithmetic, especially when
eigenvalues coalesce. A typical pitfall of closed-form approaches is the reduction of the eigenvalue
problem to the computation of roots of a cubic polynomial, see Fig.
\ref{fig:typical-eigvals-approach}. This approach, i.e., the computation of the roots of a cubic
polynomial given its monomial coefficients, is known to be ill-conditioned, see
\citet[p.~110]{Trefethen1997Numerical} and \citet[\S 26.3.3.]{Higham2002Accuracy}. While we do not
bypass the utilization of the characteristic polynomial, we try to improve the numerical stability
of the overall process by improving the stability of the individual steps, and potentially computing
additional, seemingly redundant invariants that help stabilize the computation of the eigenvalues.

\begin{figure}
    \centering
    \begin{tikzpicture}[node distance=2cm, auto, scale=1.0, transform shape]
        \node (matrix) [draw, rectangle] {$\begin{bmatrix} A_{11} & A_{12} & A_{13} \\ A_{21} & A_{22} & A_{23} \\ A_{31} & A_{32} & A_{33} \end{bmatrix}$};
        
        \node (poly) [draw, rectangle, right=of matrix] {$p(\lambda) = \lambda^3 - I_1 \lambda^2 + I_2 \lambda - I_3$};
        \node (roots) [draw, rectangle, right=of poly] {$\lambda_1, \lambda_2, \lambda_3$};
        
        \draw [->, thick] (matrix) -- (poly) node [midway, above] {\small char. poly.};
        \draw [->, thick] (poly) -- (roots) node [midway, above] {\small roots};
        \draw [->, thick] (poly) -- (roots) node [midway, below] {\small ill-conditioned};
        
        \draw [->, thick, bend left=20] (matrix) to node[midway, below]{\small well-conditioned} (roots);
        \draw [->, thick, bend left=20] (matrix) to node[midway, above]{\small eigenvalues} (roots);
        
    \end{tikzpicture}
    \caption{Typical approach for computing eigenvalues of $3 \times 3$ matrices via characteristic polynomial and its roots.}
    \label{fig:typical-eigvals-approach}
\end{figure}
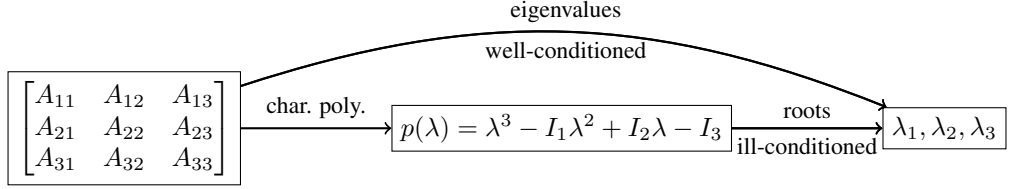

According to \citet{Blinn2007Howto}, the first known approach for improving the numerical stability
is from \citet{LaPorte1973Formulation} who proposed to use the identity $\tan(\arccos x) = \sqrt{1 - x^2} / x$
in the context of solving roots of a general cubic polynomial. When applied to the matrix
eigenvalue problem, the triple-angle expression takes the form
\begin{equation}
    \varphi = \arctan \left( \frac{\sqrt{27 (4 J_2^3 - 27 J_3^2)}}{27 J_3} \right) = \arctan \left( \frac{\sqrt{27 \Delta}}{27 J_3} \right)
    \label{eq:triple-angle-arctan}
\end{equation}
making use of the matrix \emph{discriminant} $\Delta \coloneqq 4 J_2^3 - 27 J_3^2$.
Eq.~\eqref{eq:triple-angle-arctan} has the advantage of evaluating the $\arctan(x) = x - x^3 / 3 + \mathcal{O}(x^5)$
around zero (for matrices with repeated eigenvalues), which is numerically more stable than evaluating the $\arccos(x)$
around one.

Another notable improvement is based on the work of \citet{Scherzinger2008Robust}, who proposed an
algorithm for \textit{symmetric} $3 \times 3$ matrices based on computing the distinct eigenvalue first, then
deflating the matrix to a $2 \times 2$ problem for which Wilkinson's shift is used to compute the
remaining eigenvalues. This approach is stable for symmetric matrices, but it does not
generalize to nonsymmetric matrices. In addition, it is not a closed-form expression and requires
branching and conditional statements.

The computation of the matrix discriminant $\Delta$ itself is also prone to numerical
instability, as it involves subtraction of two potentially close quantities, $4 J_2^3$
and $27 J_3^2$. The first work addressing this issue in the context of $3 \times 3$ matrices is
\citet{Habera2021Symbolic} and is based on the factorization of the discriminant from \citet{Parlett2002Discriminant}
into a sum of products of terms that vanish as the matrix approaches a matrix with multiple eigenvalues.

Recently, an alternative factorization of the discriminant $\Delta$ for symmetric matrices based on
the Cayley--Hamilton theorem was proposed in \citet{Harari2022Computation}. The authors then published
a follow-up paper \citep{Harari2023Using} where the Cayley--Hamilton factorization is abandoned in favor of
a simpler \emph{sum-of-squares} formula for the discriminant.

In \citet{Habera2021Symbolic} we advocated replacing the traditional discriminant
expressions with \emph{sum-of-products} or \emph{sum-of-squares} formulas that avoid catastrophic
cancellation. Unfortunately, as discussed in \citet{Habera2021Symbolic}, the proposed algorithm
failed to achieve eigenvalues with satisfactory accuracy for matrices with $J_2 \to 0$.
In addition, the benchmarks and interpretation of errors were intuitive, but lacked rigorous
forward or backward error analysis. We used scaled invariants $\Delta_p = 3 J_2$ and
$\Delta_q = 27 J_3$. In the present work, we use the classical definitions of the invariants
for consistency with the existing literature, especially in the engineering community where
$J_2$ and $J_3$ are widely used in constitutive modeling of materials. In addition, we
improve the numerical stability in the limit case $J_2 \to 0$ by proposing improved algorithms
for the computation of $J_2$, $J_3$, and $\Delta$.

The lack of error analysis is a common issue in the existing literature on closed-form expressions
for eigenvalues of $3 \times 3$ matrices. Terms like ``numerically stable'' or ``robust'' are often
used without rigorous justification or derivation of error bounds. We address this gap.
Additionally, only in \citet{Habera2021Symbolic} and this work is the numerical stability for the
\textit{generalized case of nonsymmetric matrices} considered.

On the other hand, the typical approach to computing eigenvalues of general matrices uses iterative
algorithms, such as the QR algorithm, which are implemented in standard libraries like LAPACK
\citep{Anderson1999Lapack}. These algorithms are based on numerically stable orthogonal
transformations to reduce the matrix to a simpler form (e.g., Hessenberg form) and then iteratively
applying the QR algorithm to converge to the eigenvalues. Unsurprisingly, these iterative algorithms
are routinely used in practice even for small $3 \times 3$ matrices.

Despite the widespread use of iterative algorithms, closed-form expressions for eigenvalues remain
important due to several reasons: 1. number of floating-point operations is significantly lower than
for iterative algorithms, which is critical in performance-sensitive applications, and 2. they allow
for symbolic differentiation, which is important when sensitivities or gradients are required, e.g.,
in optimization or machine learning applications. The latter was explored in
\citet{Habera2021Symbolic}, where the relation
\begin{equation}
    \mathbf E^\T_k = \frac{\partial \lambda_k}{\partial \mathbf A}
\end{equation}
was used to compute eigenprojectors (i.e., matrices projecting onto the eigenspaces associated with
the eigenvalues $\lambda_k$). With the eigenprojectors available in closed-form, one can compute
functions of matrices (e.g., the matrix exponential) and their derivatives in closed-form as well.
In addition, in the case of a matrix parametrized by some variable $t \in \mathbb R$,
i.e., $\mathbf A(t): t \mapsto \mathbf A(t)$ one can use the closed-form expressions and their
derivatives to study the analytical dependence of eigenvalues and eigenvectors on the parameter $t$.
Lastly, the use of trigonometric solution guarantees that the eigenvalues are ordered
$\lambda_1 \leq \lambda_2 \leq \lambda_3$, which is not the case for iterative algorithms.
Ordering of eigenvalues is important in many applications, e.g., in engineering mechanics when
computing principal stresses or strains.

\section{Numerical stability}
In this work, we use the notation and definitions from \citet{Higham2002Accuracy} and
\citet{Trefethen1997Numerical}. We follow the standard IEEE 754 model with
\begin{equation}
    \fl(x \,\text{op}\, y) = (x\,\text{op}\, y)\,(1 + \delta), \quad \text{op} \in \{+, -, *, /\}.
\end{equation}
The same applies to the floating-point representation of a number, $\fl(x) = x (1 + \delta)$. The
quantity $\delta$ is close to zero. More precisely, it is bounded as $|\delta| \le \epsmach$, where
$\epsmach$ is the unit roundoff (machine precision). In other words, each floating-point operation
of type $(+, -, *, /)$ adds a relative error of at most $\epsmach$. For IEEE 754 double precision,
we have
\begin{equation}
    \epsmach = \tfrac{1}{2}\,\beta^{1-t} = 2^{-53} \approx 1.11 \times 10^{-16},
\end{equation}
where $\beta$ is the base and $t$ is the precision (number of base-$\beta$ digits).

We also use the symbol $\theta_n$ to denote the cumulative relative error of a sequence of $n$ floating-point operations, i.e.,
\begin{equation}
    1 + \theta_n = \prod_{i=1}^n (1 \pm \delta_i)^{\pm 1}, \quad |\delta_i| \le \epsmach,
\end{equation}
with the standard bound (assuming $n\epsmach < 1$)
\begin{equation}
    |\theta_n| \le \frac{n \epsmach}{1 - n \epsmach} = \gamma_n.
\end{equation}

An algorithm $f: V \rightarrow W$ is called \emph{backward stable in the relative sense} if for all
$x \in V$ there exists $\delta x \in V$ such that
\begin{equation}
    \fl(f(x)) = f(x + \delta x), \quad \text{where} \quad \frac{\norm{\delta x}}{\norm{x}} \leq C \epsmach.
\end{equation}
In this work, $V$ and $W$ are finite-dimensional vector spaces. Most often, $V = \mathbb R^{3 \times 3}$
and $W = \mathbb R$. Since we are concerned with small matrices of fixed size $3 \times 3$, the
dependence of the constant $C$ on the problem dimension is negligible. In addition, the constant $C$
is required to be moderate, usually $C \leq 100$, often $C \leq 10$. The symbol $C$ will be used to
denote this constant in the rest of the paper.

The quantity $\norm{\delta x} / \norm{x}$ is called the (relative) \emph{backward error} of the
algorithm. In other words, the algorithm is backward stable if it computes the exact result for a
slightly perturbed input, where the perturbation is small relative to the input.

The \emph{relative condition number} of a function $f: V \rightarrow W$ at $x$ is defined as
\begin{equation}
    \kappa_f(x) \coloneqq \sup_{\delta x} \left( \frac{\norm{f(x + \delta x) - f(x)}}{\norm{f(x)}}
    \bigg/ \frac{\norm{\delta x}}{\norm{x}} \right),
\end{equation}
i.e., the worst-case relative change in the output divided by a relative change in the input. Here,
$\delta x$ is infinitesimal. That is, the above is understood in the limit $\norm{\delta x} \to 0$.
For differentiable functions, the relative condition number can be expressed in terms of the Jacobian
$\mathbf J_f(x) = \partial f / \partial x$ as \citep[Eq. 12.6]{Trefethen1997Numerical},
\begin{equation}
    \kappa_f(x) = \norm{\mathbf J_f(x)} \frac{\norm{x}}{\norm{f(x)}}.
    \label{eq:def-rel-condition-number}
\end{equation}

The \emph{absolute condition number} is defined as
\begin{equation}
    \kappa_f^\text{abs}(x) \coloneqq \sup_{\delta x} \left( \frac{\norm{f(x + \delta x) - f(x)}}{\norm{\delta x}} \right),
\end{equation}
which, using the Jacobian, can be expressed as \citep[Eq. 12.3]{Trefethen1997Numerical},
\begin{equation}
    \kappa_f^\text{abs}(x) = \norm{\mathbf J_f(x)}.
\end{equation}

The error of the floating-point evaluation of an algorithm $f$, $\norm{\fl(f(x)) - f(x)}$ at
a point $x$, is called the \emph{absolute forward error}. We say that an algorithm is
\emph{forward stable in the absolute sense} if its absolute forward error is on the order of
$\kappa_f^\text{abs}$ times the machine precision. An important result used throughout the
paper is that the forward error is bounded by the product of the condition number and the backward error
\begin{equation}
    \text{forward error} \;\le\; \text{condition number} \cdot \text{backward error}.
\end{equation}
The meaning of each term must be consistent: we bound absolute forward error by
absolute condition number and absolute backward error, or relative forward error by relative
condition number and relative backward error. Which of the two is used depends on the context and
the problem at hand.

An algorithm is called \emph{accurate} if it produces results with a small relative forward error,
see \citet[Eq. 14.2]{Trefethen1997Numerical},
\begin{equation}
    \frac{\norm{\fl(f(x)) - f(x)}}{\norm{f(x)}} \leq C \,\epsmach + \mathcal O(\epsmach^2).
\end{equation}
Accurate algorithms produce results that are as close to the exact result as the floating-point
format and machine precision allow and are the pinnacle of what one can achieve in numerical
computations.

\section{Benchmarks}

In this section, the methodology for generating numerical benchmarks is described. It could be the
case that rounding error tests are sensitive to the specific libraries, compilers, and hardware
used. We describe the procedures in detail to allow reproducibility. We also provide the data and
code used to generate the results in this paper as part of open-source library \texttt{eig3x3},
see \citet{Habera2025Eig3x3}.

Algorithms for evaluating the invariants in IEEE 754 double-precision floating-point were
implemented in C11 with Python wrappers via CFFI \citep{Rigo2025CFFI} using the \texttt{double} 64-bit
floating-point format and in NumPy 2.3.4 \citep{Harris2020Array} using the \texttt{numpy.float64} data
type. In order to compute the forward error of a function $f(x)$, we compute the reference value
$f_\text{ref}(x)$ using the mpmath 1.3.0 library \citep{Mpmath2023Python}, with precision
set to a high number of decimal places, i.e., \texttt{mpmath.dps = 256}.

In order to capture several limit cases of the eigenvalue multiplicities and conditioning of the
eigenvectors, we consider test input matrices computed as
\begin{equation}
    \fl(\mathbf A) = \fl(\mathbf U \mathbf D \mathbf U^{-1})
    \label{eq:test-matrix}
\end{equation}
where $\mathbf D = \diag(\lambda_1, \lambda_2, \lambda_3)$ is a diagonal matrix with prescribed
eigenvalues, and $\mathbf U$ is a nonsingular transformation matrix. We evaluate the matrix $\fl(\mathbf A)$
from Eq.~\eqref{eq:test-matrix} using \texttt{numpy.linalg.inv} to compute $\mathbf U^{-1}$ and
\texttt{numpy.matmul} to compute the matrix–-matrix products, all in double precision. The resulting
matrix $\fl(\mathbf A)$ is then used as input to the invariant evaluation algorithms. The floating-point
matrix $\fl(\mathbf A)$ is not guaranteed to have the exact eigenvalues $\lambda_1, \lambda_2, \lambda_3$
of the diagonal matrix $\mathbf D$. Nevertheless, we compute the forward error of an algorithm $f$ as
\begin{equation}
    \text{forward error} = \abs{\fl(f(\fl(\mathbf A))) - f_\text{ref}(\fl(\mathbf A))}.
\end{equation}
An important detail is that we compute the high-precision reference value
$f_\text{ref}(\fl(\mathbf A))$ at the floating-point matrix $\fl(\mathbf A)$, not at the exact matrix
$\mathbf A$.

\begin{figure}[htbp]
    \centering
    \begin{subfigure}[t]{0.49\linewidth}
        \centering
        \begin{tikzpicture}
            \begin{axis}[
                    width=8cm,
                    height=7cm,
                    view={0}{90},
                    samples=100,
                    samples y=100,
                    domain=-1.5:1.5,
                    domain y=0:3,
                    xlabel={Invariant $J_3$},
                    ylabel={Invariant $J_2$},
                    tick label style={font=\small},
                    label style={font=\small},
                    legend style={font=\small}]
                \addplot3 [
                    contour lua={levels={0,10}},
                    thick,
                ] {4*y^3-27*x^2};
                \addlegendentry{$\Delta$}
                \addplot+[
                    only marks,
                ] table[x=J3_values_c, y=J2_values_c] {results/invariants-double_lim_J3J2-u1.dat};
                \addlegendentry{$\delta$}
            \end{axis}
        \end{tikzpicture}
        \caption{Discriminant contour lines with the benchmark path for $\mathbf D_1 = \diag(1, 1, 1+\delta)$. This represents a limiting case
            of $J_2 \to 0$ in which each generated matrix has $\Delta = 0$, meaning we move along the double-eigenvalue path towards
            the triple-eigenvalue.}
        \label{fig:J2-J3-benchmark-d2}
    \end{subfigure}
    \hfill
    \begin{subfigure}[t]{0.49\linewidth}
        \centering
        \begin{tikzpicture}
            \begin{axis}[
                    width=8cm,
                    height=7cm,
                    view={0}{90},
                    samples=100,
                    samples y=100,
                    domain=-1.5:1.5,
                    domain y=0:3,
                    xlabel={Invariant $J_3$},
                    ylabel={Invariant $J_2$},
                    tick label style={font=\small},
                    label style={font=\small},
                    legend style={font=\small}]
                \addplot3 [
                    contour lua={levels={0,10}},
                    thick,
                ] {4*y^3-27*x^2};
                \addlegendentry{$\Delta$}
                \addplot+[
                    only marks,
                ] table[x=J3_values_c, y=J2_values_c] {results/invariants-single_lim_disc_t-u1.dat};
                \addlegendentry{$\delta$}
            \end{axis}
        \end{tikzpicture}
        \caption{Discriminant contour lines with the benchmark path for $\mathbf D_2 = \diag(-1, 1, 1+\delta)$. This represents a limiting case
            of $\Delta \to 0$, but both $J_3$ and $J_2$ stay finite and away from zero, so we move towards a double-eigenvalue
            configuration.}
        \label{fig:J2-J3-benchmark-d1}
    \end{subfigure}
    \caption{Benchmark cases in this paper. The red squares represent the limiting path $\delta \to 0$.}
    \label{fig:J2-J3-benchmarks}
\end{figure}
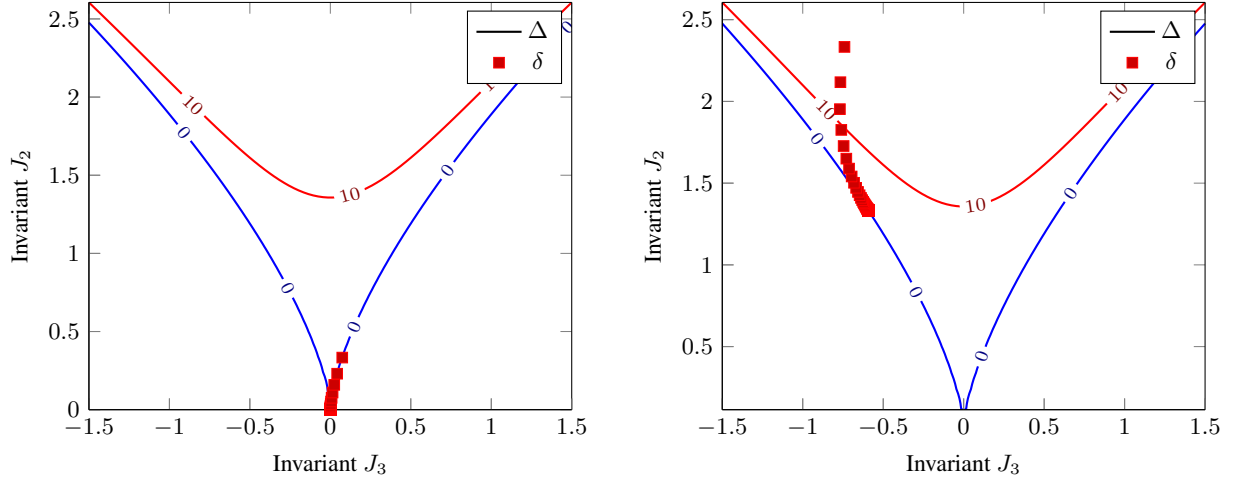

In order to capture the limit cases of eigenvalue multiplicities, we consider two benchmark paths in
this paper, parametrized by a small parameter $\delta \to 0$. The paths are given by
\begin{itemize}
    \item $\mathbf D_1 = \diag(\lambda_1, \lambda_2, \lambda_3) = \diag(1, 1, 1 + \delta),$
          which represents a limiting case of $J_2 \to 0$ and $J_3 \to 0$, moving along the
          double-eigenvalue path towards the triple-eigenvalue,
    \item $ \mathbf D_2 = \diag(\lambda_1, \lambda_2, \lambda_3) = \diag(-1, 1, 1 + \delta),$
          which represents a limiting case of $\Delta \to 0$, but both $J_3$ and $J_2$ stay finite and away
          from zero, so we move towards a double-eigenvalue configuration.
\end{itemize}
The two benchmark paths are illustrated in the $J_3$-$J_2$ plane in Fig.~\ref{fig:J2-J3-benchmarks}.

Transformation matrices $\mathbf U$ used in the benchmarks are chosen as
\begin{equation}
    \begin{aligned}
        \mathbf U_\text{symm} = \begin{bmatrix}
                                    \frac{1}{\sqrt{2}} & -\frac{1}{2}       & \frac{1}{2}        \\
                                    \frac{1}{\sqrt{2}} & \frac{1}{2}        & -\frac{1}{2}       \\
                                    0                  & \frac{1}{\sqrt{2}} & \frac{1}{\sqrt{2}}
                                \end{bmatrix}, \quad
        \mathbf U_1 = \begin{bmatrix}
                          1  & -1 & 1  \\
                          1  & 1  & 1  \\
                          -1 & -1 & 1
                      \end{bmatrix}, \quad
        \mathbf U_2(\gamma) = \begin{bmatrix}
                                  1 & 1 & 1         \\
                                  1 & 0 & 1         \\
                                  2 & 1 & 2+\gamma
                              \end{bmatrix}.
    \end{aligned}
    \label{eq:transformation-matrices}
\end{equation}

Transformation matrix $\mathbf U_\text{symm}$ represents an orthogonal transformation, so the
2-norm condition number is $\kappa_2(\mathbf U_\text{symm}) = 1$ and, as a consequence, any matrix of
the form \eqref{eq:test-matrix} is symmetric.

The matrix $\mathbf U_1$ represents a nonorthogonal transformation matrix with small 2-norm
condition number $\kappa_2(\mathbf U_1) = 2$. Matrices of the form \eqref{eq:test-matrix} with $\mathbf
    U = \mathbf U_1$ are nonsymmetric but have a well-conditioned eigenbasis.

The third case of matrix $\mathbf U_2(\gamma)$ represents a nonorthogonal transformation matrix
with tunable condition number $\kappa_2(\mathbf U_2)$. One can show that $\kappa_2(\mathbf U_2(\gamma)) \to
    \infty$ as $\gamma \to 0$ (as the rows become linearly dependent). Matrices of the form
\eqref{eq:test-matrix} with $\mathbf U = \mathbf U_2(\gamma)$ are nonsymmetric and can have an
arbitrarily ill-conditioned eigenbasis. They represent the most challenging case for numerical
evaluation of invariants and eigenvalues.

\section{Invariant $I_1$}

The first invariant $I_1$ is defined as the trace of the matrix,
\begin{equation}
    I_1(\mathbf A) \coloneqq \tr(\mathbf A) = A_{00} + A_{11} + A_{22}.
\end{equation}

The algorithm for evaluating $I_1$ sums the diagonal elements, as shown in Algorithm~\ref{alg:I1-stable}.

\begin{minipage}{1.0\linewidth}
    \begin{algorithm}[H]
        \begin{algorithmic}
            \Require $\mathbf A \in \mathbb{R}^{3 \times 3}$
            \State $I_1 = A_{00} + A_{11} + A_{22}$
            \State \Return $I_1$
        \end{algorithmic}
        \caption{Evaluation of the invariant $I_1$}
        \label{alg:I1-stable}
    \end{algorithm} \hfill
\end{minipage}

Algorithm~\ref{alg:I1-stable} is trivially backward stable, as the sum of three floating-point
numbers can be seen as the exact sum of slightly perturbed inputs. The floating-point evaluation reads
\begin{equation}
    \begin{aligned}
        \fl(I_1) & = ((A_{00} + A_{11})(1 + \delta_0) + A_{22})(1 + \delta_1)                                       \\
                 & = A_{00}(1 + \delta_0)(1 + \delta_1) + A_{11}(1 + \delta_0)(1 + \delta_1) + A_{22}(1 + \delta_1) \\
                 & = A_{00}(1 + \theta_2) + A_{11}(1 + \theta_2) + A_{22}(1 + \theta_1).
    \end{aligned}
\end{equation}
This is equivalent to a diagonal perturbation of the input matrix $\mathbf A$ with
\begin{equation}
    \begin{aligned}
        \fl(I_1) = I_1(\mathbf A + \delta \mathbf A), \quad \text{where} \quad
        \delta \mathbf A = \begin{bmatrix}
                               A_{00} \theta_2 & 0               & 0                \\
                               0               & A_{11} \theta_2 & 0                \\
                               0               & 0               & A_{22} \theta_1
                           \end{bmatrix}.
    \end{aligned}
\end{equation}
The perturbation is componentwise relatively small, i.e.,
\begin{equation}
    \frac{\abs{\delta A_{ij}}}{\abs{A_{ij}}} \le C \epsmach,
\end{equation}
with $C = 3$ for all $i, j \in \{0, 1, 2\}$.

Since the directional derivative of the trace is
\begin{equation}
    \frac{\partial}{\partial \mathbf A} \tr(\mathbf A)[\delta \mathbf A] = \tr(\delta \mathbf A) = \inner{\mathbf I}{\delta \mathbf A},
\end{equation}
we have that the Jacobian is $\mathbf J_{I_1}= \mathbf I$. This implies the following result:
Algorithm~\ref{alg:I1-stable} for evaluating $I_1$ is forward stable in the sense that the absolute forward error satisfies
\begin{equation}
    \abs{\fl(I_1) - I_1} \leq C \norm{\mathbf A} \epsmach + \mathcal O(\epsmach^2),
\end{equation}
where $C$ is a moderate constant. The relative condition number of $I_1$ is unbounded, as
\begin{equation}
    \kappa_{I_1}(\mathbf A) = \norm{\mathbf J_{I_1}} \frac{\norm{\mathbf A}}{\abs{I_1}} = \frac{C \norm{\mathbf A}}{\abs{\tr(\mathbf A)}}
\end{equation}
where constant $C$ depends on the chosen matrix norm. Thus, we cannot expect any algorithm to be accurate when $\abs{\tr(\mathbf A)}$ is small compared to $\norm{\mathbf A}$.

\section{Invariant $J_2$}
\label{sec:J2-stability}

The invariant $J_2$ (see Eq.~\eqref{eq:principal-invariants}) received a lot of attention
in the literature due to its importance in engineering mechanics and constitutive modeling of materials.
The expression with the use of diagonal differences was in the context of numerical accuracy proposed in
\citet[\S 3.1.2 Eq. 32]{Habera2021Symbolic} and in \citet[\S 4.2.1 Eq. 15]{Harari2023Using}.
In the present work, we consider a generalization of these results to nonsymmetric matrices and provide
rigorous error analysis.

\begin{lemma}{}
    \label{lem:J2-condition}
    The Jacobian of the $J_2$ invariant is
    \begin{equation}
        \mathbf J_{J_2}(\mathbf A) = \dev(\mathbf{A})^\T.
        \label{eq:J2-jacobian}
    \end{equation}
    As a consequence, the $J_2$ invariant is well-conditioned in the absolute sense for
    matrices whose deviatoric part has small norm.
\end{lemma}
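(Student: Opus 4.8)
The plan is to obtain the Jacobian by computing the directional (Gâteaux) derivative of $J_2$ and reading off its Frobenius-inner-product representative, using the same convention already adopted for $I_1$, where $\partial_{\mathbf A}\tr(\mathbf A)[\delta\mathbf A] = \inner{\mathbf I}{\delta\mathbf A}$ yielded $\mathbf J_{I_1} = \mathbf I$. The corollary about conditioning then follows immediately from the definition of the absolute condition number as the norm of the Jacobian.

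First I would write $J_2(\mathbf A) = \tfrac12\tr(\dev(\mathbf A)^2)$ and exploit that $\dev$ is a \emph{linear} map, so $\dev(\mathbf A + t\,\delta\mathbf A) = \dev(\mathbf A) + t\,\dev(\delta\mathbf A)$. Differentiating $t \mapsto \tfrac12\tr\!\big((\dev(\mathbf A) + t\,\dev(\delta\mathbf A))^2\big)$ at $t = 0$ and using cyclic invariance of the trace gives $\partial_{\mathbf A}J_2[\delta\mathbf A] = \tr(\dev(\mathbf A)\,\dev(\delta\mathbf A))$.

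The key simplification is that $\dev(\delta\mathbf A) = \delta\mathbf A - \tfrac13\tr(\delta\mathbf A)\mathbf I$, so $\tr(\dev(\mathbf A)\,\dev(\delta\mathbf A)) = \tr(\dev(\mathbf A)\,\delta\mathbf A) - \tfrac13\tr(\delta\mathbf A)\,\tr(\dev(\mathbf A))$, and the second term vanishes identically because $\tr(\dev(\mathbf A)) = 0$ by construction of the deviator. Hence $\partial_{\mathbf A}J_2[\delta\mathbf A] = \tr(\dev(\mathbf A)\,\delta\mathbf A) = \inner{\dev(\mathbf A)^\T}{\delta\mathbf A}$, which identifies $\mathbf J_{J_2}(\mathbf A) = \dev(\mathbf A)^\T$, establishing \eqref{eq:J2-jacobian}. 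Equivalently, one may observe that $\dev$ is a self-adjoint idempotent on $\mathbb R^{3\times 3}$ and push the second $\dev$ onto the first factor.

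For the corollary statement, the absolute condition number is $\kappa_{J_2}^{\text{abs}}(\mathbf A) = \norm{\mathbf J_{J_2}(\mathbf A)} = \norm{\dev(\mathbf A)^\T} = \norm{\dev(\mathbf A)}$ for any norm invariant under transposition (Frobenius or spectral), so it is small precisely when $\norm{\dev(\mathbf A)}$ is small, i.e. for matrices close to a scalar (triple-eigenvalue) matrix. I do not anticipate a genuine obstacle: the only points requiring care are the transpose bookkeeping in the inner-product identification of the gradient and the observation that the $\tr(\delta\mathbf A)$ cross-term drops out; the rest is a one-line trace computation.
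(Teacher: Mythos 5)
Your proposal is correct and takes essentially the same approach as the paper: a directional-derivative computation using linearity of $\dev$ and cyclic invariance of the trace, followed by identification of the gradient via the Frobenius inner product $\inner{\cdot}{\cdot}$. The only cosmetic difference is that you spell out why the $\tr(\delta\mathbf A)$ cross-term vanishes (tracelessness of $\dev(\mathbf A)$, equivalently self-adjointness and idempotence of $\dev$), whereas the paper absorbs this into the unannotated step $\inner{\dev(\mathbf{A})^\T}{\dev(\delta \mathbf{A})} = \inner{\dev(\mathbf{A})^\T}{\delta \mathbf{A}}$.
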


\begin{proof}
    We use the following directional derivative
    \begin{equation}
        \frac{\partial}{\partial \mathbf A} \dev(\mathbf{A})[\delta \mathbf{A}] = \dev(\delta \mathbf{A}), \label{eq:dev-derivative}
    \end{equation}
    which follows from the linearity of the deviatoric operator. Combining this with the definition of $J_2$, we have
    \begin{equation}
        \begin{aligned}
            \frac{\partial}{\partial \mathbf A} J_2 [\delta \mathbf{A}] & = \frac{\partial}{\partial \mathbf A} \frac{1}{2} \tr(\dev(\mathbf{A})^2) [\delta \mathbf{A}]                       \\
                                                                        & = \frac{1}{2} \tr \left(\dev(\mathbf{A}) \dev(\delta \mathbf{A}) + \dev(\delta \mathbf{A}) \dev(\mathbf{A}) \right) \\
                                                                        & = \tr \left(\dev(\mathbf{A}) \dev(\delta \mathbf{A}) \right)                                                        \\
                                                                        & = \inner{\dev(\mathbf{A})^\T}{\dev(\delta \mathbf{A})} = \inner{\dev(\mathbf{A})^\T}{\delta \mathbf{A}}.
        \end{aligned}
    \end{equation}
\end{proof}

\begin{minipage}{1.0\linewidth}
    \begin{algorithm}[H]
        \begin{algorithmic}
            \Require $\mathbf{A} \in \mathbb{R}^{3 \times 3}$
            \State $d_{0} = A_{00} - A_{11}$, $d_{1} = A_{00} - A_{22}$, $d_{2} = A_{11} - A_{22}$ \Comment{Diagonal differences}
            \State $\text{offdiag} = A_{01} A_{10} + A_{02} A_{20} + A_{12} A_{21}$ \Comment{Off-diagonal products}
            \State $\text{diag} = \frac{1}{6} (d_0^2 + d_1^2 + d_2^2)$ \Comment{Sum of squares of diagonal differences}
            \State $J_2 = \text{diag} + \text{offdiag}$
            \State \Return $J_2$
        \end{algorithmic}
        \caption{Evaluation of the invariant $J_2$}
        \label{alg:J2-stable}
    \end{algorithm}
\end{minipage}

\begin{lemma}{}
    \label{lem:J2-backward}
    Algorithm~\ref{alg:J2-stable} for evaluating $J_2$ is backward stable in the componentwise relative sense.
\end{lemma}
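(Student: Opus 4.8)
The plan is to run a standard forward running-error analysis on the five lines of Algorithm~\ref{alg:J2-stable} and then repackage the accumulated rounding errors as a componentwise relative perturbation of $\mathbf A$, exploiting the fact that the two sub-expressions $\text{diag}$ and $\text{offdiag}$ depend on disjoint sets of matrix entries.

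First I would record the exact identity the algorithm realizes. Writing $\bar A = \tfrac13 \tr(\mathbf A)$ and using $\tr(\dev(\mathbf A)^2) = \sum_i (A_{ii}-\bar A)^2 + \sum_{i\neq j} A_{ij}A_{ji}$ together with the elementary identity $\sum_i (x_i - \bar x)^2 = \tfrac13 \sum_{i<j}(x_i-x_j)^2$ for three reals, one gets
\[
    J_2(\mathbf A) = \tfrac16\big(d_0^2 + d_1^2 + d_2^2\big) + \big(A_{01}A_{10} + A_{02}A_{20} + A_{12}A_{21}\big),
\]
with $d_0 = A_{00}-A_{11}$, $d_1 = A_{00}-A_{22}$, $d_2 = A_{11}-A_{22}$, so the algorithm is exact in exact arithmetic. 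Next I would apply the IEEE~754 model line by line — one rounding per diagonal difference, one per square, two for the additions forming the sum of squares and one for the scaling by $\tfrac16$, one per off-diagonal product, two for the additions forming $\text{offdiag}$, and one for the final sum — and collect the factors with the $\theta_n/\gamma_n$ calculus to obtain
\[
    \fl(J_2) = \tfrac16\sum_{k=0}^{2} d_k^2\,(1+\mu_k) \;+\; \sum_{k=0}^{2} p_k\,(1+\nu_k), \qquad |\mu_k| \le \gamma_8,\quad |\nu_k| \le \gamma_4,
\]
where $p_0 = A_{01}A_{10}$, $p_1 = A_{02}A_{20}$, $p_2 = A_{12}A_{21}$ (the precise subscripts on $\gamma$ are immaterial; for a fixed $3\times 3$ problem they are single-digit constants times $\epsmach$).

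Finally I would absorb the two groups of errors into disjoint entries. The off-diagonal part is routine: each $p_k$ involves a distinct pair $(A_{ij},A_{ji})$ with $i\neq j$, so setting $\widetilde A_{01} = (1+\nu_0)A_{01}$, $\widetilde A_{02} = (1+\nu_1)A_{02}$, $\widetilde A_{12} = (1+\nu_2)A_{12}$ and leaving $A_{10},A_{20},A_{21}$ untouched makes each perturbed product equal $p_k(1+\nu_k)$. For the diagonal part I would use that $g(x,y,z) \coloneqq \tfrac16\big((x-y)^2+(x-z)^2+(y-z)^2\big)$ is homogeneous of degree two: putting $D \coloneqq \tfrac16\sum_k d_k^2(1+\mu_k)$ and $s \coloneqq g(A_{00},A_{11},A_{22}) = \tfrac16\sum_k d_k^2$, if $s>0$ then $D/s = \sum_k w_k (1+\mu_k)$ with $w_k = d_k^2/\sum_j d_j^2 \ge 0$ and $\sum_k w_k = 1$, so $D/s$ is a convex combination of the numbers $1+\mu_k$; hence $D/s>0$ and $|D/s - 1| \le \max_k |\mu_k| \le \gamma_8$. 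Setting $t \coloneqq \sqrt{D/s}$ and $\widetilde A_{ii} \coloneqq t\,A_{ii}$ then gives $g(\widetilde A_{00},\widetilde A_{11},\widetilde A_{22}) = t^2 s = D$ with relative change $|t-1| = |\sqrt{D/s}-1| \le |D/s-1| \le \gamma_8$. The case $s=0$ (i.e. $A_{00}=A_{11}=A_{22}$) is trivial: the three subtractions are then exact, $\text{diag}$ evaluates to $0$ exactly, and the diagonal is left unperturbed. For the combined perturbation $\delta\mathbf A$ one then has $J_2(\mathbf A + \delta\mathbf A) = D + \sum_k p_k(1+\nu_k) = \fl(J_2)$ exactly, while $|\delta A_{ij}| \le C\epsmach\,|A_{ij}|$ componentwise with a single-digit $C$, which is the claim.

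The one genuinely non-routine step — and the place where a naive ``every subtraction and product is individually backward stable'' argument fails — is the diagonal block: the entries $A_{00},A_{11},A_{22}$ each appear in two of the differences $d_k$ with independent rounding errors, and since the computed $\hat d_k$ generally violate $\hat d_0 + \hat d_2 = \hat d_1$, no single $\widetilde{\mathbf A}$ can match all three. The degree-two homogeneity of $g$, which lets one uniform rescaling of the diagonal realize the aggregate rounding error, is the device that circumvents this.
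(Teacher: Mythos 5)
Your proof is correct and takes essentially the same route as the paper's: both absorb the off-diagonal roundings into $A_{01},A_{02},A_{12}$ alone, and both absorb the aggregate diagonal roundings into a single uniform rescaling of $A_{00},A_{11},A_{22}$, exploiting the degree-two homogeneity of the diagonal-difference term --- your $t=\sqrt{D/s}$ is precisely the paper's $1+\alpha$. Two small refinements on your side: you bound $\abs{\sqrt{D/s}-1}\le\abs{D/s-1}$ directly rather than via a first-order Taylor expansion (the paper's expansion gives the slightly tighter $\tfrac12\gamma_7$ but carries an $\mathcal O(\gamma^2)$ remainder), and you explicitly handle the degenerate case $d_0=d_1=d_2=0$, where the paper's formula for $\alpha$ would otherwise divide by zero.
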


\begin{proof}
    We note that the final expression for $J_2$ is a sum of two terms, where the first one is based on
    off-diagonal products and the second one is a sum of squares of the diagonal differences. Let us
    examine the sum of squares of the diagonal differences first. Diagonal differences are computed as
    \begin{equation}
        \begin{aligned}
            \fl(d_0) = \fl(A_{00} - A_{11}) = (A_{00} - A_{11})(1 + \delta_0), \\
            \fl(d_1) = \fl(A_{00} - A_{22}) = (A_{00} - A_{22})(1 + \delta_1), \\
            \fl(d_2) = \fl(A_{11} - A_{22}) = (A_{11} - A_{22})(1 + \delta_2).
        \end{aligned}
        \label{eq:diagonal-differences-fwd-error}
    \end{equation}
    and, using Higham's $\theta$-notation, we have
    \begin{equation}
        \fl(\text{diag}) =
        \frac{1}{6} \left(d_0^2 (1 + \theta_6) + d_1^2 (1 + \theta'_6) + d_2^2 (1 + \theta_5) \right).
    \end{equation}
    
    The largest relative error here is $(1 + \theta_6)$, since the first diagonal difference $d_0$
    incurs errors from the subtraction itself, squaring, two additions to the other diagonal
    differences, and one division by 6.
    
    Each off-diagonal product produces a single roundoff error, and summing them together with the
    diagonal term yields
    \begin{equation}
        \begin{aligned}
            \fl(J_2) = A_{01} A_{10}(1 + \theta_5) + A_{02} A_{20} (1 + \theta'_5) + A_{12} A_{21} (1 + \theta_4) \\
            + \frac{1}{6} \left(d_0^2 (1 + \theta_7) + d_1^2 (1 + \theta'_7) + d_2^2 (1 + \theta_6) \right).
        \end{aligned}
    \end{equation}
    
    Here, we already recognize the perturbations required for the off-diagonal terms, i.e.,
    \begin{equation}
        \begin{aligned}
            \mathbf{\delta A} =
            \begin{bmatrix}
                A_{00} \alpha & A_{01} \theta_5 & A_{02} \theta'_5 \\
                0             & A_{11} \alpha   & A_{12} \theta_4  \\
                0             & 0               & A_{22} \alpha
            \end{bmatrix}
        \end{aligned}
        \label{eq:delta-A}
    \end{equation}
    while $\alpha$ for the diagonal perturbation is to be determined.
    For the exact computation with the perturbed input, we have
    \begin{equation}
        \begin{aligned}
            J_2(\mathbf{A} + \mathbf{\delta A})
             & = A_{01} A_{10}(1 + \theta_5) + A_{02} A_{20} (1 + \theta'_5) + A_{12} A_{21} (1 + \theta_4) \\
             & \quad + \frac{1}{6} (1 + \alpha)^2 \left( d_0^2 + d_1^2 + d_2^2 \right).
        \end{aligned}
    \end{equation}
    To match the diagonal contributions we need $\alpha$ such that
    \begin{equation}
        \begin{aligned}
            (1 + \alpha)^2 (d_0^2 + d_1^2 + d_2^2) =
            d_0^2 (1 + \theta_7) + d_1^2 (1 + \theta'_7) + d_2^2 (1 + \theta_6)
        \end{aligned}
    \end{equation}
    which is satisfied for
    \begin{equation}
        \alpha = \sqrt{\frac{d_0^2 (1 + \theta_7) + d_1^2 (1 + \theta'_7) + d_2^2 (1 + \theta_6)}{d_0^2 + d_1^2 + d_2^2}} - 1.
    \end{equation}
    A bound on $\alpha$ follows from the first-order Taylor expansion $\sqrt{1 + x} = 1 + x/2 + \mathcal O(x^2)$ for $x \approx 0$:
    \begin{equation}
        \begin{aligned}
            \left|\alpha\right|
            = & \left| \sqrt{1 + \frac{d_0^2 \theta_7 + d_1^2 \theta'_7 + d_2^2 \theta_6}{d_0^2 + d_1^2 + d_2^2}} - 1 \right|                                   \\
            = & \left|1 + \frac{1}{2} \xi + \mathcal O (\xi^2) - 1\right|                                                           & \text{(Taylor expansion)} \\
            = & \frac{1}{2} \abs{\xi} + \abs{\mathcal O (\xi^2)} \leq \frac{1}{2} \gamma_7 + \left| \mathcal O (\gamma_7^2)\right|. & \text{(see below)}
        \end{aligned}
    \end{equation}
    The last inequality follows from
    \begin{equation}
        \begin{aligned}
            \abs{\xi} = \abs{\frac{d_0^2 \theta_7 + d_1^2 \theta'_7 + d_2^2 \theta_6}{d_0^2 + d_1^2 + d_2^2}}
            \leq \abs{\frac{\max (\theta_7, \theta'_7, \theta_6) (d_0^2 + d_1^2 + d_2^2)}{d_0^2 + d_1^2 + d_2^2}}
            \leq \gamma_7
        \end{aligned}
    \end{equation}
    since the squared diagonal differences are nonnegative.
    
    From the way we constructed the perturbation (i.e., relative to the matrix entries) we now have the
    componentwise relative backward error result
    \begin{equation}
        \fl(J_2(\mathbf A)) = J_2(\mathbf A + \mathbf{\delta A}),
        \quad \text{where} \quad \frac{|\delta \mathbf A_{ij}|}{|\mathbf A_{ij}|} \leq C \epsmach + \mathcal O (\epsmach^2),
        \label{eq:J2-bwd-error}
    \end{equation}
    for all $i,j$, with $C \approx 5$.
\end{proof}

\begin{theorem}{}
    \label{thm:J2-fwd-stability}
    Algorithm~\ref{alg:J2-stable} is forward stable in the sense that the absolute forward error satisfies
    \begin{equation}
        \abs{\fl(J_2) - J_2} \leq C \norm{\dev(\mathbf{A})}^2 \epsmach + \mathcal O (\epsmach^2).
    \end{equation}
\end{theorem}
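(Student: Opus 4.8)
The plan is to sharpen the estimate already implicit in the proof of Lemma~\ref{lem:J2-backward} rather than to invoke the generic bound $\text{forward error} \le \text{condition number} \cdot \text{backward error}$ directly. Subtracting the exact value $J_2$ from the floating-point expression derived there gives the exact error
\begin{equation}
    \fl(J_2) - J_2 = A_{01}A_{10}\,\theta_5 + A_{02}A_{20}\,\theta'_5 + A_{12}A_{21}\,\theta_4 + \tfrac{1}{6}\bigl(d_0^2\,\theta_7 + d_1^2\,\theta'_7 + d_2^2\,\theta_6\bigr),
\end{equation}
so that, bounding each $\theta$ by $\gamma_7$ and using $\gamma_7 = 7\epsmach + \mathcal O(\epsmach^2)$,
\begin{equation}
    \abs{\fl(J_2) - J_2} \le \gamma_7\Bigl(\abs{A_{01}A_{10}} + \abs{A_{02}A_{20}} + \abs{A_{12}A_{21}} + \tfrac{1}{6}\bigl(d_0^2 + d_1^2 + d_2^2\bigr)\Bigr).
\end{equation}
It then remains to show that the bracketed quantity is at most a constant multiple of $\norm{\dev(\mathbf A)}^2$.

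For the off-diagonal products I would use the elementary inequality $2\abs{A_{ij}A_{ji}} \le A_{ij}^2 + A_{ji}^2$ together with the fact that $\dev(\mathbf A)$ and $\mathbf A$ have identical off-diagonal entries, giving $\abs{A_{01}A_{10}} + \abs{A_{02}A_{20}} + \abs{A_{12}A_{21}} \le \tfrac12\sum_{i\ne j}\dev(\mathbf A)_{ij}^2$. For the diagonal term I would observe that each diagonal difference $d_i$ is unchanged when $\mathbf A$ is replaced by $\dev(\mathbf A)$ (the shift by $\tfrac13 I_1 \mathbf I$ cancels), i.e.\ $d_0 = \dev(\mathbf A)_{00} - \dev(\mathbf A)_{11}$ and similarly for $d_1, d_2$; expanding the squares and using $\dev(\mathbf A)_{00} + \dev(\mathbf A)_{11} + \dev(\mathbf A)_{22} = 0$ to cancel the cross terms yields the identity $d_0^2 + d_1^2 + d_2^2 = 3\sum_i \dev(\mathbf A)_{ii}^2$, hence $\tfrac16(d_0^2 + d_1^2 + d_2^2) = \tfrac12\sum_i \dev(\mathbf A)_{ii}^2$. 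Adding the two contributions collapses the bracket to $\tfrac12\sum_{i,j}\dev(\mathbf A)_{ij}^2 = \tfrac12\norm{\dev(\mathbf A)}_F^2$, so $\abs{\fl(J_2) - J_2} \le \tfrac72\epsmach\,\norm{\dev(\mathbf A)}_F^2 + \mathcal O(\epsmach^2)$; by equivalence of norms on $\mathbb R^{3\times 3}$ this is the claimed bound, with $C = \tfrac72$ in the Frobenius norm.

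As a consistency check I would also recover the result from the condition-number route: Lemma~\ref{lem:J2-condition} gives $\kappa^{\text{abs}}_{J_2}(\mathbf A) = \norm{\dev(\mathbf A)}$, and although the backward perturbation $\delta\mathbf A$ of Lemma~\ref{lem:J2-backward} is only componentwise small relative to the entries of $\mathbf A$, its deviatoric part satisfies $\dev(\delta\mathbf A)_{ii} = \alpha\,\dev(\mathbf A)_{ii}$ (the uniform diagonal factor $\alpha$ survives the trace subtraction) and has the same off-diagonal entries as $\delta\mathbf A$, so $\norm{\dev(\delta\mathbf A)} \le C\epsmach\,\norm{\dev(\mathbf A)}$; Cauchy--Schwarz with the Jacobian $\dev(\mathbf A)^\T$ then closes the estimate and the purely quadratic remainder of $J_2$ is $\mathcal O(\epsmach^2)$. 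The one nontrivial point, in either route, is exactly this upgrade of one factor of $\norm{\mathbf A}$ to $\norm{\dev(\mathbf A)}$: a blind application of $\norm{\delta\mathbf A}\le C\epsmach\norm{\mathbf A}$ would only give the weaker bound $C\,\norm{\dev(\mathbf A)}\,\norm{\mathbf A}\,\epsmach$, and one has to use that $J_2$ sees $\mathbf A$ only through $\dev(\mathbf A)$ --- equivalently, the algebraic identity above --- to obtain the stated estimate.
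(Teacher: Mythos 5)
Your proof is correct, but it takes a genuinely different route from the paper's. The paper proves the theorem by the standard ``forward error $\le$ condition number $\times$ backward error'' recipe with a deviatoric twist: starting from Lemma~\ref{lem:J2-backward}, it Taylor-expands $J_2(\mathbf A + \delta\mathbf A) - J_2(\mathbf A)$ around $\mathbf A$, pairs the first-order term with the Jacobian $\dev(\mathbf A)^\T$, replaces $\delta\mathbf A$ by $\dev(\delta\mathbf A)$ under the inner product (since $\dev(\mathbf A)^\T$ is traceless), applies Cauchy--Schwarz, and finally proves separately --- via the diag/offdiag orthogonal decomposition of the Frobenius norm and the explicit structure of the backward perturbation Eq.~\eqref{eq:delta-A} --- that $\norm{\dev(\delta\mathbf A)} \le C\epsmach\norm{\dev(\mathbf A)}$. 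You instead do a direct forward-error analysis: subtract $J_2$ from the floating-point expansion, bound each $\theta$ by $\gamma_7$, and collapse the resulting bracket exactly to $\tfrac12\norm{\dev(\mathbf A)}_F^2$ via two algebraic identities (the AM--GM bound $2\abs{A_{ij}A_{ji}}\le A_{ij}^2 + A_{ji}^2$ for the off-diagonal part, and $d_0^2 + d_1^2 + d_2^2 = 3\sum_i\dev(\mathbf A)_{ii}^2$ for the diagonal part, using $\sum_i\dev(\mathbf A)_{ii}=0$). Your route buys an explicit constant $C = 7/2$ in the Frobenius norm, avoids the Taylor remainder entirely, and makes the upgrade from $\norm{\mathbf A}$ to $\norm{\dev(\mathbf A)}$ transparent as a pointwise algebraic fact about the summands; the paper's route buys a reusable template (Jacobian $\to$ Cauchy--Schwarz $\to$ deviatoric backward bound) that it subsequently instantiates for $J_3$ and $\Delta$, and directly motivates the notions of relative deviatoric conditioning and deviatoric backward stability introduced afterward. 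Your ``consistency check'' paragraph is in fact a compressed version of the paper's proof, so you have effectively given both arguments.
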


\begin{proof}
    This is a consequence of the Jacobian and the backward stability result.
    Combining Lemmas~\ref{lem:J2-condition}
    and~\ref{lem:J2-backward}, we have
    \begin{equation}
        \begin{aligned}
            \abs{\fl(J_2) - J_2} & = \abs{J_2(\mathbf{A} + \mathbf{\delta A}) - J_2(\mathbf{A})}                                                & (\text{Lemma } \ref{lem:J2-backward})                   \\
                                 & = \abs{\inner{\dev(\mathbf{A})^\T}{\delta \mathbf{A}} + \mathcal O(\norm{\delta \mathbf{A}}^2)}              & (\text{Taylor expansion, Lemma \ref{lem:J2-condition}}) \\
                                 & = \abs{\inner{\dev(\mathbf{A})^\T}{\dev(\delta \mathbf{A})} + \mathcal O(\norm{\delta \mathbf{A}}^2)}                                                                  \\
                                 & \leq \norm{\dev(\mathbf{A})^\T}_2 \norm{\dev(\delta \mathbf{A})}_2 + \mathcal O(\norm{\delta \mathbf{A}}^2). & (\text{Cauchy--Schwarz}) \label{eq:dp-proof-last}
        \end{aligned}
    \end{equation}
    
    At this point, we need to show that the componentwise relative backward error bound from Eq.~\eqref{eq:J2-bwd-error} implies a normwise bound on the deviatoric part.
    This is not true in general, but we use the specific structure of the perturbation $\mathbf{\delta A}$ from
    Eq.~\eqref{eq:delta-A}. First, we notice that $\norm{\diag(\dev(\delta \mathbf{A}))} = \abs{\alpha}
        \norm{\diag(\dev(\mathbf{A}))}$, where $\diag(\cdot)$ denotes the diagonal part of a matrix. In addition,
    for any matrix,
    \begin{equation}
        \norm{\mathbf{B}}_F^2 = \norm{\diag(\mathbf{B})}_F^2 + \norm{\offdiag(\mathbf{B})}_F^2,
    \end{equation}
    since the diagonal and off-diagonal parts are orthogonal in the Frobenius inner product. We can write
    \begin{equation}
        \begin{aligned}
            \norm{\dev(\delta \mathbf{A})}_F^2 & = \norm{\diag(\dev(\delta \mathbf{A}))}_F^2 + \norm{\offdiag(\dev(\delta \mathbf{A}))}_F^2                                        \\
                                               & \leq \alpha^2 \norm{\diag(\dev(\mathbf{A}))}_F^2 + \max(\theta_5, \theta_5')^2 \norm{\offdiag(\dev(\mathbf{A}))}_F^2              \\
                                               & \leq \max(\alpha, \theta_5, \theta_5')^2 \left( \norm{\diag(\dev(\mathbf{A}))}_F^2 + \norm{\offdiag(\dev(\mathbf{A}))}_F^2\right) \\
                                               & = \max(\alpha, \theta_5, \theta_5')^2 \norm{\dev(\mathbf{A})}_F^2.
        \end{aligned}
    \end{equation}
    
    By norm equivalence in finite-dimensional spaces we obtain
    \begin{equation}
        \norm{\dev(\delta \mathbf{A})} \leq C \norm{\dev(\mathbf{A})} \, \epsmach + \mathcal O(\epsmach^2).
    \end{equation}
    Plugging this into Eq.~\eqref{eq:dp-proof-last} gives
    \begin{equation}
        \abs{\fl(J_2) - J_2} \leq C \norm{\dev(\mathbf{A})}^2 \epsmach + \mathcal O (\epsmach^2).
        \label{eq:J2-fwd-stability}
    \end{equation}
\end{proof}

\begin{lemma}{}
    \label{lem:dev-J2-estimate}
    Let $\mathbf{A} = \mathbf{U} \mathbf{D} \mathbf{U}^{-1}$ be a real, diagonalizable $3 \times 3$ matrix with real spectrum. Then
    \begin{equation}
        \frac{2}{9 \kappa_2^2}\, J_2 \;\leq\; \norm{\dev(\mathbf{A})}_F^2 \;\leq\; 18 \kappa_2^2\, J_2,
    \end{equation}
    where $\kappa_2 = \norm{\mathbf{U}}_2 \, \norm{\mathbf{U}^{-1}}_2$ is the spectral condition number of the matrix $\mathbf{U}$.
\end{lemma}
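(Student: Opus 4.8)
The plan is to push the similarity structure $\mathbf A = \mathbf U \mathbf D \mathbf U^{-1}$ through the deviatoric operator and then trade the spectral and Frobenius norms against each other using the standard $3\times 3$ norm-equivalence constants. The key opening observation is that $\dev$ commutes with similarity transforms: since $\dev(\mathbf X) = \mathbf X - \tfrac13 \tr(\mathbf X)\,\mathbf I$ and the trace is similarity invariant, $\dev(\mathbf A) = \mathbf U\,\dev(\mathbf D)\,\mathbf U^{-1}$. Writing $\mu_k = \lambda_k - \tfrac13 I_1$ for the deviatoric eigenvalues, the matrix $\dev(\mathbf D) = \diag(\mu_1,\mu_2,\mu_3)$ is diagonal, so $2 J_2 = \tr(\dev(\mathbf A)^2) = \tr(\dev(\mathbf D)^2) = \sum_k \mu_k^2 = \norm{\dev(\mathbf D)}_F^2$, i.e. $\norm{\dev(\mathbf D)}_F = \sqrt{2 J_2}$. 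This already isolates the content of the lemma: $J_2$ controls the Frobenius norm of the \emph{diagonal} matrix $\dev(\mathbf D)$, not of $\dev(\mathbf A)$ itself, because $\dev(\mathbf A)$ is non-normal in general.

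Next I would bound $\norm{\dev(\mathbf A)}_2$ above and below by $\norm{\dev(\mathbf D)}_2$. From $\dev(\mathbf A) = \mathbf U\,\dev(\mathbf D)\,\mathbf U^{-1}$ and submultiplicativity of the spectral norm, $\norm{\dev(\mathbf A)}_2 \le \norm{\mathbf U}_2 \norm{\dev(\mathbf D)}_2 \norm{\mathbf U^{-1}}_2 = \kappa_2\, \norm{\dev(\mathbf D)}_2$; applying the same inequality to $\dev(\mathbf D) = \mathbf U^{-1}\,\dev(\mathbf A)\,\mathbf U$ gives $\norm{\dev(\mathbf D)}_2 \le \kappa_2\, \norm{\dev(\mathbf A)}_2$. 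Hence
\[
\kappa_2^{-1}\,\norm{\dev(\mathbf D)}_2 \;\le\; \norm{\dev(\mathbf A)}_2 \;\le\; \kappa_2\,\norm{\dev(\mathbf D)}_2 .
\]

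Finally I would convert between the spectral and Frobenius norms with the $3\times 3$ equivalence $\norm{\mathbf B}_2 \le \norm{\mathbf B}_F \le \sqrt{3}\,\norm{\mathbf B}_2$, applied to both $\dev(\mathbf A)$ and $\dev(\mathbf D)$. For the upper bound: $\norm{\dev(\mathbf A)}_F \le \sqrt 3\,\norm{\dev(\mathbf A)}_2 \le \sqrt 3\,\kappa_2\,\norm{\dev(\mathbf D)}_2 \le \sqrt 3\,\kappa_2\,\norm{\dev(\mathbf D)}_F = \sqrt 6\,\kappa_2\,\sqrt{J_2}$, so $\norm{\dev(\mathbf A)}_F^2 \le 6\kappa_2^2 J_2 \le 18\kappa_2^2 J_2$. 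For the lower bound: $\norm{\dev(\mathbf A)}_F \ge \norm{\dev(\mathbf A)}_2 \ge \kappa_2^{-1}\norm{\dev(\mathbf D)}_2 \ge (\sqrt 3\,\kappa_2)^{-1}\norm{\dev(\mathbf D)}_F = (\sqrt 3\,\kappa_2)^{-1}\sqrt{2 J_2}$, so $\norm{\dev(\mathbf A)}_F^2 \ge \tfrac{2}{3\kappa_2^2} J_2 \ge \tfrac{2}{9\kappa_2^2} J_2$. Both stated inequalities follow; the slack between $6$ and $18$, resp. between $\tfrac23$ and $\tfrac29$, just leaves room in the constants, and since $\kappa_2 \ge 1$ only the direction of the inequalities matters.

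There is no deep obstacle here; the work is bookkeeping. I expect the only real subtleties to be (i) remembering that $J_2 = \tfrac12\tr(\dev(\mathbf A)^2)$ equals $\tfrac12\norm{\dev(\mathbf D)}_F^2$ but \emph{not} $\tfrac12\norm{\dev(\mathbf A)}_F^2$ (otherwise the lemma would be vacuous), and (ii) keeping the two norm-equivalence conversions oriented so that the upper and lower chains land inside the claimed window. If one wanted the sharper constants $6$ and $\tfrac23$, the very same argument delivers them verbatim.
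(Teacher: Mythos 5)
Your proof is correct and follows the same overall skeleton as the paper's: similarity commutes with $\dev$, similarity invariance of the trace gives $\norm{\dev(\mathbf D)}_F = \sqrt{2J_2}$, and submultiplicativity plus norm equivalence finishes. The one genuine difference is where the norms are traded. The paper applies submultiplicativity in the Frobenius norm, $\norm{\mathbf U\,\mathbf X\,\mathbf U^{-1}}_F \le \norm{\mathbf U}_F\norm{\mathbf X}_F\norm{\mathbf U^{-1}}_F$, and then converts $\norm{\mathbf U}_F \le \sqrt3\,\norm{\mathbf U}_2$ and $\norm{\mathbf U^{-1}}_F \le \sqrt3\,\norm{\mathbf U^{-1}}_2$, paying a $\sqrt3$ on each outer factor and landing on the constants $18$ and $2/9$. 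You instead apply submultiplicativity in the spectral norm and only convert the two \emph{inner} matrices $\dev(\mathbf A)$ and $\dev(\mathbf D)$ between $\norm{\cdot}_2$ and $\norm{\cdot}_F$; because each conversion costs $\sqrt3$ at most once in each direction, you arrive at the sharper constants $6$ and $2/3$, which you correctly note imply the lemma's looser bounds since $J_2 \ge 0$ for matrices with real spectrum. (For what it is worth, the mixed inequality $\norm{\mathbf U\,\mathbf X\,\mathbf U^{-1}}_F \le \norm{\mathbf U}_2\,\norm{\mathbf X}_F\,\norm{\mathbf U^{-1}}_2$ would give the still sharper constants $2$ and $2$, but neither proof uses it.) Both proofs are sound; your route is marginally tighter at no extra cost, while the paper's is a one-liner once Frobenius submultiplicativity is invoked.
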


\begin{proof}
    Let $\mathbf{A} = \mathbf{U} \mathbf{D} \mathbf{U}^{-1}$ with $\mathbf
        D=\diag(\lambda_1,\lambda_2,\lambda_3)$ and $\lambda_i \in \mathbb R$. Denote the mean eigenvalue
    by $\bar\lambda = \frac{1}{3}\sum_{i=1}^3 \lambda_i = \frac{1}{3}\tr(\mathbf{A})$ and define the
    centered eigenvalues $\mu_i = \lambda_i - \bar\lambda$ (so $\sum_i \mu_i = 0$). The deviatoric
    part of $\mathbf{A}$ is
    \begin{equation}
        \begin{aligned}
            \mathbf{S} \coloneqq \dev(\mathbf{A})
            = \mathbf{A} - \bar\lambda \mathbf{I}
            = \mathbf{U}(\mathbf{D} - \bar\lambda \mathbf{I})\mathbf{U}^{-1}
            = \mathbf{U} \,\diag(\mu_1,\mu_2,\mu_3)\, \mathbf{U}^{-1}.
        \end{aligned}
    \end{equation}
    Using similarity invariance of the trace, we have
    \begin{equation}
        \begin{aligned}
            2 J_2
            = \tr(\mathbf{S}^2)
            = \tr\!\big(\mathbf{U} \diag(\mu)^2 \mathbf{U}^{-1}\big)
            = \tr\!\big(\diag(\mu)^2\big)
            = \sum_{i=1}^3 \mu_i^2
            = \norm{\diag(\mu)}_F^2
        \end{aligned}
    \end{equation}
    where $\diag(\mu) \coloneqq \diag(\mu_1,\mu_2,\mu_3)$. Hence
    \begin{equation}
        \norm{\diag(\mu)}_F = \sqrt{2 J_2}.
    \end{equation}
    For any matrices $\mathbf{A},\mathbf{X},\mathbf{B}$, the inequality
    \begin{equation}
        \norm{\mathbf{A} \mathbf{X} \mathbf{B}}_F \le \norm{\mathbf{A}}_F \, \norm{\mathbf{X}}_F\, \norm{\mathbf{B}}_F
    \end{equation}
    holds, since the Frobenius norm is submultiplicative. Applying this with $\mathbf{A}=\mathbf{U}$,
    $\mathbf{X}=\diag(\mu)$, and $\mathbf{B}=\mathbf{U}^{-1}$,
    \begin{equation}
        \begin{aligned}
            \norm{\mathbf{S}}_F
            = \norm{\mathbf{U} \diag(\mu) \mathbf{U}^{-1}}_F
            \le \norm{\mathbf{U}}_F \, \norm{\diag(\mu)}_F\, \norm{\mathbf{U}^{-1}}_F
            = 3 \kappa_2 \sqrt{2 J_2}.
        \end{aligned}
    \end{equation}
    We used the norm equivalence and the upper bound
    $\norm{\mathbf U}_F \leq \sqrt{3} \norm{\mathbf U}_2$ for any $3 \times 3$ matrix and the spectral norm $\norm{\cdot}_2$.
    Squaring gives the upper bound $\norm{\dev(\mathbf{A})}_F^2 \le 18 \kappa_2^2 J_2$.
    
    For the lower bound, rewrite $\diag(\mu) = \mathbf{U}^{-1} \mathbf{S} \mathbf{U}$ and apply the same inequality
    \begin{equation}
        \begin{aligned}
            \norm{\diag(\mu)}_F
            = \norm{\mathbf{U}^{-1} \mathbf{S} \mathbf{U}}_F
            \le \norm{\mathbf{U}^{-1}}_F \, \norm{\mathbf{S}}_F\, \norm{\mathbf{U}}_F
            = 3 \kappa_2 \norm{\mathbf{S}}_F.
        \end{aligned}
    \end{equation}
    Hence
    \begin{equation}
        \norm{\mathbf{S}}_F \ge \frac{\norm{\diag(\mu)}_F}{3 \kappa_2} = \frac{\sqrt{2 J_2}}{3 \kappa_2}
    \end{equation}
    and squaring yields
    \begin{equation}
        \norm{\dev(\mathbf{A})}_F^2 \ge \frac{2}{9 \kappa_2^2} J_2.
    \end{equation}
\end{proof}

\begin{corollary}{}
    \label{cor:J2-fwd-error-diagonalizable}
    For a real, diagonalizable $3 \times 3$ matrix $\mathbf{A} = \mathbf{U} \mathbf{D} \mathbf{U}^{-1}$ with real spectrum, Algorithm \ref{alg:J2-stable} satisfies
    \begin{equation}
        \abs{\fl(J_2) - J_2} \;\leq\; C\, \kappa_2^2\, J_2\, \epsmach \;+\; \mathcal O(\epsmach^2),
    \end{equation}
    where $\kappa_2 = \norm{\mathbf{U}}_2\, \norm{\mathbf{U}^{-1}}_2$ is the spectral condition number of $\mathbf{U}$.
    In particular, if $\mathbf{A}$ is symmetric (such that $\kappa_2 = 1$), the algorithm is accurate.
\end{corollary}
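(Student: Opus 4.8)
The plan is simply to chain the two results already proved in this section. Theorem~\ref{thm:J2-fwd-stability} gives the absolute forward error bound $\abs{\fl(J_2) - J_2} \le C \norm{\dev(\mathbf{A})}^2 \epsmach + \mathcal O(\epsmach^2)$, phrased in terms of the norm of the deviatoric part, while Lemma~\ref{lem:dev-J2-estimate} controls that norm by the invariant itself through $\norm{\dev(\mathbf{A})}_F^2 \le 18\,\kappa_2^2\, J_2$. Substituting the latter into the former and absorbing the numerical factor $18$ (together with the harmless, fixed dimension-dependent factor from passing between the Frobenius norm and whatever norm is used in Theorem~\ref{thm:J2-fwd-stability}, since the matrices are of fixed size $3\times 3$) into the moderate constant $C$ yields $\abs{\fl(J_2) - J_2} \le C\,\kappa_2^2\, J_2\, \epsmach + \mathcal O(\epsmach^2)$, which is the claimed bound. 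Only the upper bound from Lemma~\ref{lem:dev-J2-estimate} is needed; the lower bound plays no role here.

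For the symmetric case I would first recall that a real symmetric matrix is orthogonally diagonalizable, so one may take $\mathbf{U}$ orthogonal, whence $\kappa_2 = \norm{\mathbf{U}}_2\,\norm{\mathbf{U}^{-1}}_2 = 1$. The bound then reads $\abs{\fl(J_2) - J_2} \le C\, J_2\, \epsmach + \mathcal O(\epsmach^2)$. Dividing by $\abs{J_2}$ — and noting that $J_2 \ge 0$ for a real spectrum, since $2 J_2 = \sum_i \mu_i^2$ as established in the proof of Lemma~\ref{lem:dev-J2-estimate}, so $\abs{J_2} = J_2$ — gives the relative forward error bound $\abs{\fl(J_2) - J_2}/\abs{J_2} \le C\,\epsmach + \mathcal O(\epsmach^2)$, which is exactly the definition of an accurate algorithm from Section~\ref{sec:J2-stability}'s preliminaries.

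The one point deserving a word of care is the degenerate case $J_2 = 0$, i.e.\ a symmetric matrix with a triple eigenvalue, which makes the relative-error statement ill-posed as written. There $\dev(\mathbf{A}) = 0$, so every diagonal difference $d_i$ and every off-diagonal product in Algorithm~\ref{alg:J2-stable} is exactly zero, the algorithm returns $0$, and the forward error vanishes identically; accuracy holds trivially. I do not expect a genuine obstacle — the statement really is a corollary — so the main work is bookkeeping of the constants and the norm-equivalence factor, plus explicitly flagging this $J_2 = 0$ edge case.
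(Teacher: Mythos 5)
Your proposal is correct and takes exactly the route the paper does: the paper's proof is a one-liner citing Lemma~\ref{lem:dev-J2-estimate} and Theorem~\ref{thm:J2-fwd-stability}, which is precisely the substitution you carry out. The extra care you take over norm-equivalence constants, the derivation of the ``accurate'' claim in the symmetric case, and the $J_2=0$ edge case is sound but goes beyond what the paper spells out.
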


\begin{proof}
    This is a consequence of Lemma \ref{lem:dev-J2-estimate} and Theorem \ref{thm:J2-fwd-stability}.
\end{proof}

\emph{Remark (nonnormality and Henrici).}
For nonnormal matrices, Henrici's departure from normality considers the Schur form $\mathbf{A} =
    \mathbf{Q}(\mathbf{D} + \mathbf{N})\mathbf{Q}^{T}$ with $\mathbf{Q}$ orthogonal, $\mathbf{D}$
(block-)diagonal, and $\mathbf{N}$ strictly upper triangular, and defines the nonnormality measure as
$\nu(\mathbf{A}) \coloneqq \norm{\mathbf{N}}_F$, so that $\nu(\mathbf{A})=0$ iff $\mathbf{A}$ is normal. In
practice, large $\nu(\mathbf{A})$ is often accompanied by ill-conditioned eigenvectors (large
$\kappa=\norm{\mathbf{U}}\,\norm{\mathbf{U}^{-1}}$), which explains the $\kappa^2$ amplification
appearing in our bounds.
\footnote{\url{https://nhigham.com/2020/11/24/what-is-a-nonnormal-matrix/}}

\begin{figure}[htbp]
    \centering
    \begin{subfigure}[t]{0.49\linewidth}
        \centering
        \ErrorPlot{results/invariants-double_lim_J3J2-u1.dat}{J2}
        \caption{Forward error for the benchmark case in Fig.~\ref{fig:J2-J3-benchmark-d2} with transformation matrix $\mathbf U_1$ (well-conditioned, $\kappa_2 = 2$).}
        \label{fig:J2-error-d2-u1}
    \end{subfigure}
    \hfill
    \begin{subfigure}[t]{0.49\linewidth}
        \centering
        \ErrorPlot{results/invariants-single_lim_disc_t-u1.dat}{J2}
        \caption{Forward error for the benchmark case in Fig.~\ref{fig:J2-J3-benchmark-d1} with transformation matrix $\mathbf U_1$ (well-conditioned, $\kappa_2 = 2$).}
        \label{fig:J2-error-d1-u1}
    \end{subfigure}
    
    \begin{subfigure}[t]{0.49\linewidth}
        \ErrorPlot{results/invariants-double_lim_J3J2-symm.dat}{J2}
        \caption{Forward error for the benchmark case in Fig.~\ref{fig:J2-J3-benchmark-d2} with transformation matrix $\mathbf U_\text{symm}$ (orthogonal, $\kappa_2 = 1$).}
        \label{fig:J2-error-d2-symm}
    \end{subfigure}
    \hfill
    \begin{subfigure}[t]{0.49\linewidth}
        \ErrorPlot{results/invariants-single_lim_disc_t-symm.dat}{J2}
        \caption{Forward error for the benchmark case in Fig.~\ref{fig:J2-J3-benchmark-d1} with transformation matrix $\mathbf U_\text{symm}$ (orthogonal, $\kappa_2 = 1$).}
        \label{fig:J2-error-d1-symm}
    \end{subfigure}
    
    \begin{subfigure}[t]{0.49\linewidth}
        \ErrorPlot{results/invariants-double_lim_J3J2-u2.dat}{J2}
        \caption{Forward error for the benchmark case in Fig.~\ref{fig:J2-J3-benchmark-d2} with transformation matrix $\mathbf U_2(\gamma)$ (ill-conditioned eigenbasis).}
        \label{fig:J2-error-d2-u2}
    \end{subfigure}
    \hfill
    \begin{subfigure}[t]{0.49\linewidth}
        \ErrorPlot{results/invariants-single_lim_disc_t-u2.dat}{J2}
        \caption{Forward error for the benchmark case in Fig.~\ref{fig:J2-J3-benchmark-d1} with transformation matrix $\mathbf U_2(\gamma)$ (ill-conditioned eigenbasis).}
        \label{fig:J2-error-d1-u2}
    \end{subfigure}
    
    \caption{Numerical stability analysis for the invariant $J_2$.}
    \label{fig:J2-analysis}
\end{figure}

\emph{Remark (relative deviatoric conditioning).}
The relative condition number as defined in Eq.~\eqref{eq:def-rel-condition-number} is not informative for the
invariant $J_2$. For $\mathbf{A} = \diag(1,1,1+\delta)$, as $\delta \to 0$ we have
\begin{equation}
    \kappa_{J_2}(\mathbf{A})
    = \norm{\mathbf{J}_{J_2}} \frac{\norm{\mathbf{A}}}{\abs{J_2}}
    = \frac{\norm{\dev(\mathbf{A})}_F}{\frac{1}{2}\norm{\dev(\mathbf{A})}_F^2} \norm{\mathbf{A}}_F
    = \frac{2 \norm{\mathbf{A}}_F}{\norm{\dev(\mathbf{A})}_F}  \to \infty,
\end{equation}
where we used Lemma \ref{lem:J2-condition} and, for symmetric $\mathbf{A}$, $2J_2 = \norm{\dev(\mathbf{A})}_F^2$.
Nevertheless, Corollary \ref{cor:J2-fwd-error-diagonalizable} shows
that the algorithm is accurate for this symmetric case. Inspecting the proof of Theorem
\ref{thm:J2-fwd-stability} reveals that it is the deviatoric part of the perturbation that must be
controlled, rather than the full perturbation $\delta \mathbf{A}$. Motivated by this we define the
\emph{relative deviatoric condition number}
\begin{equation}
    \kappa_f^{\dev}(\mathbf{A}) \coloneqq \norm{\mathbf{J}_f} \frac{\norm{\dev(\mathbf{A})}}{\norm{f(\mathbf{A})}}.
\end{equation}
For $f=J_2$ and symmetric $\mathbf{A}$,
\begin{equation}
    \kappa_{J_2}^{\dev}(\mathbf{A})
    = \frac{\norm{\dev(\mathbf{A})}_F}{\frac{1}{2}\norm{\dev(\mathbf{A})}_F^2} \norm{\dev(\mathbf{A})}_F
    = 2, \quad (\text{for symmetric } \mathbf{A}),
\end{equation}
so $J_2$ is well-conditioned in the relative deviatoric sense for symmetric matrices.

The backward stability notion used to derive the forward error bound also needs refinement. What is
required in the proof of Theorem~\ref{thm:J2-fwd-stability} (in the first-order term) is
\begin{equation}
    \frac{\norm{\dev(\delta \mathbf{A})}}{\norm{\dev(\mathbf{A})}} \le C \epsmach,
\end{equation}
which we term \emph{relative deviatoric backward stability}. This notion is neither stronger nor
weaker than componentwise relative backward stability. We demonstrate this by two examples.

First, a perturbation that is componentwise relative stable but not relative deviatoric stable
\begin{equation}
    \delta \mathbf{A} = \begin{bmatrix}
        0 & 0 & 0         \\
        0 & 0 & 0         \\
        0 & 0 & \epsmach
    \end{bmatrix}
    \quad \text{for} \quad
    \mathbf{A} = \begin{bmatrix}
        1 & 0 & 0  \\
        0 & 1 & 0  \\
        0 & 0 & 1
    \end{bmatrix}
\end{equation}
has clearly each component small relative to $\mathbf{A}$, but $\norm{\dev(\delta \mathbf{A})}$
is of order $\epsmach$ while $\norm{\dev(\mathbf{A})}=0$.

Second, a perturbation that is relative deviatoric stable but not componentwise stable
\begin{equation}
    \delta \mathbf{A} = \mathbf{I} \quad \text{for} \quad \mathbf{A} = \mathbf{I}
\end{equation}
has $\norm{\dev(\delta \mathbf{A})} = 0$ so the relative deviatoric condition is satisfied,
but the componentwise relative error is of order 1.

\begin{corollary}{}
    \label{cor:J2-rel-bwd-stability}
    Algorithm \ref{alg:J2-stable} is backward stable in the relative deviatoric sense.
\end{corollary}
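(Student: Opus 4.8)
The plan is to revisit the backward error analysis already performed in the proof of Lemma~\ref{lem:J2-backward}, but to read off a bound on $\norm{\dev(\delta\mathbf{A})}$ rather than on the individual components $|\delta A_{ij}|$. In fact, almost all of the work has been done: in the proof of Theorem~\ref{thm:J2-fwd-stability} it was shown that the specific perturbation $\delta\mathbf{A}$ from Eq.~\eqref{eq:delta-A} satisfies
\[
\norm{\dev(\delta\mathbf{A})}_F \;\leq\; \max(\alpha,\theta_5,\theta_5')\,\norm{\dev(\mathbf{A})}_F,
\]
and that $|\alpha| \leq \tfrac12\gamma_7 + \mathcal{O}(\gamma_7^2)$ while $|\theta_5|,|\theta_5'| \leq \gamma_5$. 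Since for fixed $3\times 3$ matrices $\gamma_7 \leq C\epsmach$ with a moderate constant, this immediately gives
\[
\frac{\norm{\dev(\delta\mathbf{A})}}{\norm{\dev(\mathbf{A})}} \;\leq\; C\epsmach + \mathcal{O}(\epsmach^2),
\]
which is precisely the definition of relative deviatoric backward stability.

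Concretely, the steps I would carry out are: (i) recall from the proof of Lemma~\ref{lem:J2-backward} that $\fl(J_2(\mathbf{A})) = J_2(\mathbf{A} + \delta\mathbf{A})$ with $\delta\mathbf{A}$ given explicitly by Eq.~\eqref{eq:delta-A}, where the diagonal entries carry the factor $\alpha$ with $|\alpha| \leq \tfrac12\gamma_7 + \mathcal{O}(\gamma_7^2)$ and the off-diagonal entries carry $\theta_5,\theta_5',\theta_4$ with magnitudes at most $\gamma_5$; (ii) quote the chain of Frobenius-norm inequalities from the proof of Theorem~\ref{thm:J2-fwd-stability} that splits $\dev(\delta\mathbf{A})$ into its diagonal and off-diagonal parts, uses $\norm{\diag(\dev(\delta\mathbf{A}))}_F = |\alpha|\,\norm{\diag(\dev(\mathbf{A}))}_F$ and the componentwise bound on the off-diagonal block, and recombines via orthogonality of the diagonal and off-diagonal parts in the Frobenius inner product; (iii) conclude $\norm{\dev(\delta\mathbf{A})}_F \leq \max(|\alpha|,\gamma_5)\,\norm{\dev(\mathbf{A})}_F$; and (iv) observe $\max(|\alpha|,\gamma_5) \leq C\epsmach + \mathcal{O}(\epsmach^2)$ for a moderate constant $C$, which yields the claim.

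There is essentially no new obstacle here — the corollary is a repackaging of intermediate estimates already established — so the only thing to be careful about is bookkeeping: making sure the $\mathcal{O}(\epsmach^2)$ terms are carried along honestly (the $\alpha$ bound is only first-order in $\gamma_7$), and noting that the case $\dev(\mathbf{A}) = 0$ is vacuous for a relative bound (or handled separately, since then $\dev(\delta\mathbf{A}) = 0$ as well because every entry of $\delta\mathbf{A}$ in Eq.~\eqref{eq:delta-A} is proportional to an entry of $\mathbf{A}$ that contributes to $\dev(\mathbf{A})$, and the diagonal differences $d_i$ all vanish). The mild subtlety worth a sentence is why this notion is genuinely different from componentwise relative backward stability, but that has already been illustrated by the two examples preceding the corollary, so I would simply point back to them.
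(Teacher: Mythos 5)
Your proof is correct and takes essentially the same route as the paper: the paper's proof is a one-line pointer to exactly the two ingredients you identify, namely the explicit perturbation $\delta\mathbf{A}$ from Eq.~\eqref{eq:delta-A} established in Lemma~\ref{lem:J2-backward} and the Frobenius-norm chain from the proof of Theorem~\ref{thm:J2-fwd-stability} that yields $\norm{\dev(\delta\mathbf{A})}_F \leq \max(|\alpha|,|\theta_5|,|\theta_5'|)\,\norm{\dev(\mathbf{A})}_F$. Your additional remark on the $\dev(\mathbf{A})=0$ case is a sensible refinement the paper leaves implicit, and your reading of it is correct.
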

\begin{proof}
    This is a consequence of the proof of Lemma~\ref{lem:J2-backward} and the discussion in the proof
    of Theorem~\ref{thm:J2-fwd-stability}.
\end{proof}

\emph{Remark (intuition behind Algorithm \ref{alg:J2-stable}).} Algorithm \ref{alg:J2-stable} evaluates
$J_2$ by first forming the diagonal differences. This step is crucial for numerical stability
near $J_2=0$. In particular, it guarantees
\begin{equation}
    \fl(J_2(\alpha \mathbf{I})) = 0,
\end{equation}
so the algorithm is exact for the scaled identity matrix. In fact, for this to hold we need to show that the quadratic
term in Corollary \ref{cor:J2-fwd-error-diagonalizable} vanishes as well. This is a consequence of the second
directional derivative of $J_2$ (i.e., action of the Hessian) being
\begin{equation}
    \frac{\partial^2}{\partial \mathbf{A}^2} J_2 [\delta \mathbf{A}, \delta \mathbf{A}] =
    \inner{\dev(\delta \mathbf{A})^\T}{\dev(\delta \mathbf{A})},
\end{equation}
but the relative deviatoric backward stability then implies that
\begin{equation}
    \abs{\inner{\dev(\delta \mathbf{A})^\T}{\dev(\delta \mathbf{A})}} \leq C \norm{\dev(\mathbf{A})}^2 \epsmach^2
\end{equation}
so the quadratic term vanishes for $\mathbf{A} = \alpha \mathbf{I}$.

As seen in Theorem~\ref{thm:J2-fwd-stability}, this behavior is a necessary consequence of any algorithm that is
backward stable in the relative deviatoric sense. Moreover, in the vicinity of the scaled identity,
for example for some $\mathbf{A} = \alpha \mathbf{I} + \mathbf{E}$ where $\mathbf{E}$ is elementwise of
order $\epsmach$, the diagonal differences and the off-diagonal products are all of order
$\epsmach$. This prevents catastrophic cancellation and leads to the expression for $J_2$ that is of
order $\epsmach^2$.

\subsection{Discussion of the numerical benchmarks}

There are three different implementations of evaluation of $J_2$ benchmarked in Fig.~\ref{fig:J2-analysis}:
\textit{naive}, \textit{naive tensor}, and \textit{present}.

\emph{Naive} approach is based on Algorithm~\ref{alg:J2-naive}, which is an unrolled polynomial expression (monomial sum).
There is no structure-exploiting rearrangement of terms, so this algorithm is expected to be numerically unstable.
The second implementation is called \emph{naive tensor} and is based on the definition of $J_2$ as
$J_2 = \frac{1}{2} \tr(\dev(\mathbf{A})^2)$, where all operations are computed via a tensor
implementation in NumPy. The algorithm is listed in Algorithm~\ref{alg:J2-tensor}, where the trace is computed
using \texttt{numpy.trace} \citep{Numpy2025Trace} and matrix multiplication using
\texttt{numpy.matmul} \citep{Numpy2025Matmul}.

\begin{minipage}{1.0\linewidth}
    \begin{algorithm}[H]
        \begin{algorithmic}
            \Require $\mathbf{A} \in \mathbb{R}^{3 \times 3}$
            \State $J_2 = \frac{1}{3} (A_{00}^2 - A_{00} A_{11} - A_{00} A_{22} + 3 A_{01} A_{10} + 3 A_{02} A_{20} + A_{11}^2 - A_{11} A_{22} + 3 A_{12} A_{21} + A_{22}^2)$
            \State \Return $J_2$
        \end{algorithmic}
        \caption{\emph{Naive} evaluation of the invariant $J_2$}
        \label{alg:J2-naive}
    \end{algorithm}
\end{minipage}

\begin{minipage}{1.0\linewidth}
    \begin{algorithm}[H]
        \begin{algorithmic}
            \Require $\mathbf{A} \in \mathbb{R}^{3 \times 3}$
            \State $\mathbf S = \mathbf A - \frac{1}{3} \tr(\mathbf{A}) \mathbf I$ \Comment{Deviatoric part}
            \State $J_2 = \frac{1}{2} \tr(\mathbf S^2)$ \Comment{Matrix multiplication and trace}
            \State \Return $J_2$
        \end{algorithmic}
        \caption{\emph{Naive tensor} evaluation of the invariant $J_2$}
        \label{alg:J2-tensor}
    \end{algorithm}
\end{minipage}

The \textit{naive} implementation shows the largest forward errors in all benchmark cases, as seen in Fig.~\ref{fig:J2-analysis}.

The \textit{naive tensor} implementation is more accurate than the naive one, but still shows large forward
errors, especially in Figs.~\ref{fig:J2-error-d2-u1} and \ref{fig:J2-error-d2-symm}. The reason is that the
deviatoric part is computed based on the trace shift. Computation of the trace introduces rounding
errors, which then prevent the deviatoric part from being exactly zero even for the scaled identity
matrix.

Lastly, results for the \textit{present} algorithm (Algorithm~\ref{alg:J2-stable}) are included. This algorithm shows the
best accuracy in all benchmarks. In all cases the stability bound from Theorem
\ref{thm:J2-fwd-stability} is satisfied. This is true even for the most challenging case of the
transformation matrix being nonorthogonal and nearly singular, $\mathbf{U} = \mathbf{U}_2(\gamma)$. The
$\gamma$ parameter was chosen as $\gamma = 10^{-3}$, which leads to condition number
$\kappa_2(\mathbf{U}_2) \approx 9 \times 10^3$. The benchmark case of Fig.~\ref{fig:J2-J3-benchmark-d2}
has $J_2$ approaching zero, so in order to achieve the accuracy
promised by Corollary~\ref{cor:J2-fwd-error-diagonalizable}, the absolute forward error must decrease
proportionally. This is observed in Figs.~\ref{fig:J2-error-d2-u1}, \ref{fig:J2-error-d2-u2}
and \ref{fig:J2-error-d2-symm}. For the case of $\mathbf{D} = \diag(1, 1, 1 + \delta)$ and $\delta
    \approx 10^{-16}$ the exact value of $J_2 \approx \delta^{2} = 10^{-32}$. The accurate algorithm
must compute this value with relative error of order $\epsmach \approx 10^{-16}$, which means an
absolute error of order $10^{-48}$. This is indeed achieved in Fig.~\ref{fig:J2-error-d2-symm} and
for the well-conditioned case of Fig.~\ref{fig:J2-error-d2-u1}.

Note, that the included stability bound plots (solid lines) in Fig.~\ref{fig:J2-analysis} are based only on the
lowest order term from Eq.~\eqref{eq:J2-fwd-stability}, i.e., $\norm{\dev(\mathbf{A})}_F^2 \epsmach$. Following the
discussion in the remark about relative deviatoric conditioning, the higher order terms are proportional to
$\norm{\dev(\mathbf{A})}_F^2 \epsmach^2$ so they are negligible.

\section{Invariant $J_3$}

The invariant $J_3$ (see Eq.~\eqref{eq:principal-invariants}) is defined as the determinant of the deviatoric
part of a matrix. The algorithm presented in this section can be seen as a generalization of the algorithm in
\citet[\S 4.2.3 Eq. 16]{Harari2023Using} to nonsymmetric matrices.

\begin{lemma}
    \label{lem:J3-jacobian}
    The Jacobian of the $J_3$ invariant is given by
    \begin{equation}
        \mathbf J_{J_3} = \dev (\cof(\dev(\mathbf A))).
        \label{eq:J3-jacobian}
    \end{equation}
\end{lemma}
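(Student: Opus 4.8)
The plan is to compute the directional derivative of $J_3(\mathbf{A}) = \det(\dev(\mathbf{A}))$ in an arbitrary direction $\delta \mathbf{A}$, read off the Jacobian from the resulting inner product, and then simplify using properties of the cofactor and deviatoric operators. First I would apply the chain rule: since $J_3 = \det \circ \dev$, and by Lemma~\ref{lem:J2-condition}'s displayed identity the derivative of $\dev$ is simply $\dev(\delta \mathbf{A})$, it suffices to differentiate the determinant at the point $\mathbf{S} \coloneqq \dev(\mathbf{A})$ in the direction $\dev(\delta \mathbf{A})$. The classical Jacobi formula gives
\begin{equation}
    \frac{\partial}{\partial \mathbf{A}} \det(\mathbf{S})[\delta \mathbf{S}] = \tr(\cof(\mathbf{S})^\T \delta \mathbf{S}) = \inner{\cof(\mathbf{S})}{\delta \mathbf{S}},
\end{equation}
so composing, $\frac{\partial}{\partial \mathbf{A}} J_3[\delta \mathbf{A}] = \inner{\cof(\dev(\mathbf{A}))}{\dev(\delta \mathbf{A})}$.

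The next step is to move the $\dev$ off of $\delta \mathbf{A}$ and onto the left slot of the inner product. Because $\dev$ is an orthogonal projection with respect to the Frobenius inner product (it subtracts the trace part, which is orthogonal to the trace-free part), it is self-adjoint: $\inner{\mathbf{B}}{\dev(\mathbf{C})} = \inner{\dev(\mathbf{B})}{\mathbf{C}}$ for all $\mathbf{B}, \mathbf{C}$. Applying this with $\mathbf{B} = \cof(\dev(\mathbf{A}))$ and $\mathbf{C} = \delta \mathbf{A}$ yields
\begin{equation}
    \frac{\partial}{\partial \mathbf{A}} J_3[\delta \mathbf{A}] = \inner{\dev(\cof(\dev(\mathbf{A})))}{\delta \mathbf{A}},
\end{equation}
and since the condition number and forward-error machinery in this paper are phrased via $\mathbf{J}_f$ with $\frac{\partial f}{\partial \mathbf A}[\delta \mathbf A] = \inner{\mathbf{J}_f}{\delta \mathbf{A}}$ (matching the convention $\mathbf{J}_{J_2} = \dev(\mathbf{A})^\T$ up to the transpose arising from $\inner{\mathbf{B}^\T}{\cdot}$ in the real Frobenius pairing), we can read off $\mathbf{J}_{J_3} = \dev(\cof(\dev(\mathbf{A})))$. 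One should be slightly careful here about whether a transpose appears: in the $J_2$ lemma a transpose showed up because $\tr(\mathbf{B}\,\delta\mathbf{A}) = \inner{\mathbf{B}^\T}{\delta\mathbf{A}}$, whereas Jacobi's formula already produces $\tr(\cof(\mathbf{S})^\T\delta\mathbf{S})$ with the transpose built in, so the two transposes are consistent and the stated formula has none.

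The main obstacle, such as it is, is bookkeeping rather than depth: getting the transpose conventions exactly right between Jacobi's formula, the Frobenius pairing $\inner{\cdot}{\cdot}$, and the paper's definition of $\mathbf{J}_f$, and justifying that $\dev$ is self-adjoint under that pairing. A clean way to dispatch the self-adjointness is to write $\dev(\mathbf{B}) = \mathbf{B} - \tfrac{1}{3}\tr(\mathbf{B})\mathbf{I}$ and compute $\inner{\mathbf{B}}{\dev(\mathbf{C})} = \inner{\mathbf{B}}{\mathbf{C}} - \tfrac{1}{3}\tr(\mathbf{C})\inner{\mathbf{B}}{\mathbf{I}} = \inner{\mathbf{B}}{\mathbf{C}} - \tfrac{1}{3}\tr(\mathbf{B})\tr(\mathbf{C})$, which is visibly symmetric in $\mathbf{B}$ and $\mathbf{C}$. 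Everything else is a one-line application of the chain rule plus Jacobi's identity, so I would keep the write-up short.
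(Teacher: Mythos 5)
Your proposal is correct and follows essentially the same route as the paper: chain rule via the linearity of $\dev$, Jacobi's formula for the determinant derivative, and then moving $\dev$ onto the left slot of the Frobenius pairing. The only difference is that you explicitly justify the self-adjointness of $\dev$ (and discuss the transpose bookkeeping), which the paper leaves implicit when passing from $\inner{\cof(\dev(\mathbf A))}{\dev(\delta \mathbf A)}$ to $\inner{\dev(\cof(\dev(\mathbf A)))}{\delta \mathbf A}$.
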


\begin{proof}
    Using
    \begin{equation}
        \frac{\partial}{\partial \mathbf A} \det(\mathbf A)[\delta \mathbf A] = \inner{\cof(\mathbf A)}{\delta \mathbf A}
    \end{equation}
    and the linearity of the deviatoric operator Eq.~\eqref{eq:dev-derivative}, we have
    \begin{equation}
        \begin{aligned}
            \frac{\partial}{\partial \mathbf A} J_3 [\delta \mathbf A] & = \frac{\partial}{\partial \mathbf A} \det(\dev(\mathbf A)) [\delta \mathbf A] \\
                                                                       & = \inner{\cof(\dev(\mathbf A))}{\dev(\delta \mathbf A)}                        \\
                                                                       & = \inner{\dev(\cof(\dev(\mathbf A)))}{\delta \mathbf A}.
        \end{aligned}
    \end{equation}
\end{proof}

Motivated by the observations in Section~\ref{sec:J2-stability}, we propose the following algorithm for evaluating $J_3$.

\begin{minipage}{1.0\linewidth}
    \begin{algorithm}[H]
        \begin{algorithmic}
            \Require $\mathbf A \in \mathbb{R}^{3 \times 3}$
            \State $d_{0} = A_{00} - A_{11}$, $d_{1} = A_{00} - A_{22}$, $d_{2} = A_{11} - A_{22}$ \Comment{Diagonal differences}
            \State $t_1 = d_1 + d_2$
            \State $t_2 = d_0 - d_2$
            \State $t_3 = -d_0 - d_1$
            \State $\text{offdiag} = A_{01} A_{12} A_{20} + A_{02} A_{10} A_{21}$ \Comment{Off-diagonal products}
            \State $\text{mixed} = \frac{1}{3} (A_{01} A_{10} t_1 + A_{02} A_{20} t_2 + A_{12} A_{21} t_3)$ \Comment{Mixed products}
            \State $\text{diag} = \frac{1}{27} t_1 t_2 t_3$ \Comment{Product of diagonal differences}
            \State $J_3 = \text{offdiag} + \text{mixed} - \text{diag}$
            \State \Return $J_3$
        \end{algorithmic}
        \caption{Evaluation of the $J_3$ invariant}
        \label{alg:J3-stable}
    \end{algorithm}
\end{minipage}

Algorithm \ref{alg:J3-stable} is an expansion of the determinant of the deviatoric part of
$\mathbf A$ expressed in terms of diagonal differences, off-diagonal products, and mixed products.
Similar to the $J_2$ invariant, $J_3$ approaches zero as the matrix
$\mathbf A$ approaches a scaled identity matrix. This is the reason for forming the diagonal
differences.

However, the $J_3$ invariant approaches zero in a more general case, when the deviatoric part of
$\mathbf A$ becomes singular. Consider an example matrix
\begin{equation}
    \mathbf A = \diag(1, 2, 3) = \begin{bmatrix}
        1 & 0 & 0  \\
        0 & 2 & 0  \\
        0 & 0 & 3
    \end{bmatrix}.
\end{equation}
This matrix is symmetric, but its deviatoric part is singular, i.e., $J_3 = \det(\dev(\mathbf A)) = 0$.
The diagonal differences in floating-point arithmetic are computed as
\begin{equation}
    \fl(d_0) = \fl(1 - 2) = -1 (1 + \delta_0), \quad \fl(d_1) = \fl(1 - 3) = -2 (1 + \delta_1), \quad \fl(d_2) = \fl(2 - 3) = -1 (1 + \delta_2),
\end{equation}
where $\abs{\delta_i} \leq \epsmach$. The diagonal combination $t_2$ is computed as
\begin{equation}
    \fl(t_2) = \fl(d_0 - d_2) = \fl(-1 (1 + \delta_0) + 1 (1 + \delta_2)) = (\delta_2 - \delta_0)(1 + \delta_3), \quad \abs{\delta_3} \leq \epsmach.
\end{equation}
This shows that the relative forward error $\abs{\fl(t_2) - t_2} / \abs{t_2}$ is unbounded. As a
consequence, Algorithm~\ref{alg:J3-stable} does not produce an exact zero for exactly singular
deviatoric matrices and cannot be considered accurate in those cases.

Assume we have an algorithm for $\fl(J_3)$ that is backward stable in the relative
deviatoric sense, i.e.,
\begin{equation}
    \fl(J_3(\mathbf A)) = J_3(\mathbf A + \delta \mathbf A), \quad \text{with} \quad \frac{\norm{\dev(\delta \mathbf A)}}{\norm{\dev(\mathbf A)}} \leq C \epsmach,
\end{equation}
for some constant $C$. Then we proceed similarly to the proof of Lemma~\ref{lem:J2-backward} and combine the backward error with the Jacobian
\begin{equation}
    \begin{aligned}
        \abs{\fl(J_3) - J_3} & = \abs{J_3(\mathbf A + \delta \mathbf A) - J_3(\mathbf A)}                                                      \\
                             & = \abs{\inner{\dev (\cof( \dev(\mathbf A)))}{\delta \mathbf A} + \mathcal O(\norm{\delta \mathbf A}^2)}         \\
                             & \leq \norm{\dev (\cof( \dev(\mathbf A)))} \norm{\dev(\delta \mathbf A)} + \mathcal O(\norm{\delta \mathbf A}^2) \\
                             & \leq C \norm{\dev (\cof( \dev(\mathbf A)))} \norm{\dev(\mathbf A)} \epsmach + \mathcal O(\epsmach^2).
        \label{eq:J3-fwd-error}
    \end{aligned}
\end{equation}

\begin{lemma}
    \label{lem:J3-fwd-stability}
    Any deviatoric backward stable algorithm for evaluating $J_3$ must be forward stable in the sense that the absolute forward error must satisfy
    \begin{equation}
        \abs{\fl(J_3) - J_3} \leq C \norm{\dev (\cof(\dev \mathbf A))} \norm{\dev(\mathbf A)} \epsmach + \mathcal O(\epsmach^2).
        \label{eq:J3-fwd-stability}
    \end{equation}
\end{lemma}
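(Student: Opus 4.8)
The plan is to package the displayed computation \eqref{eq:J3-fwd-error} that precedes the lemma as a short formal proof, following the same template used for Theorem~\ref{thm:J2-fwd-stability}. The statement is conditional: it quantifies over all algorithms that happen to be backward stable in the relative deviatoric sense, so I would begin by writing that hypothesis explicitly — there exists $\delta\mathbf A$ with $\fl(J_3(\mathbf A)) = J_3(\mathbf A + \delta\mathbf A)$ and $\norm{\dev(\delta\mathbf A)} \le C\,\norm{\dev(\mathbf A)}\,\epsmach$ — and then Taylor-expand $J_3$ about $\mathbf A$ using the Jacobian from Lemma~\ref{lem:J3-jacobian}, giving
\[
\fl(J_3) - J_3 \;=\; \inner{\dev(\cof(\dev(\mathbf A)))}{\delta\mathbf A} \;+\; \mathcal O(\norm{\delta\mathbf A}^2).
\]

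The one step that deserves a comment is the reduction from $\delta\mathbf A$ to $\dev(\delta\mathbf A)$ inside the inner product. The Jacobian $\mathbf J_{J_3} = \dev(\cof(\dev(\mathbf A)))$ lies in the image of $\dev$ and is therefore traceless; since $\delta\mathbf A - \dev(\delta\mathbf A) = \tfrac13\tr(\delta\mathbf A)\,\mathbf I$ is Frobenius-orthogonal to every traceless matrix, we may replace $\delta\mathbf A$ by $\dev(\delta\mathbf A)$ without changing the value of $\inner{\mathbf J_{J_3}}{\delta\mathbf A}$. This is exactly the mechanism that makes the deviatoric backward error, rather than the full normwise backward error, sufficient to control the forward error — and it is why the definition of relative deviatoric backward stability was introduced. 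Applying Cauchy--Schwarz then yields $\abs{\inner{\mathbf J_{J_3}}{\dev(\delta\mathbf A)}} \le \norm{\mathbf J_{J_3}}\,\norm{\dev(\delta\mathbf A)}$, into which the backward error hypothesis substitutes directly.

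It remains to absorb the second-order remainder: by norm equivalence on the fixed $9$-dimensional space of $3\times 3$ matrices, $\norm{\delta\mathbf A}$ is within a constant of $\norm{\dev(\delta\mathbf A)}$ plus the (harmless, since it does not feed into the first-order term) scalar part, so $\mathcal O(\norm{\delta\mathbf A}^2) = \mathcal O(\epsmach^2)$, and collecting constants gives \eqref{eq:J3-fwd-stability}. I do not expect a genuine obstacle here; the argument is a direct transcription of the $J_2$ case. The only thing to be careful about is the bookkeeping of the moderate constant $C$, which changes from step to step as one switches between the Frobenius norm, the spectral norm, and componentwise bounds — but on a fixed finite-dimensional space this affects only the size of $C$, not the structure of the bound.
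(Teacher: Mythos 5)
Your proof takes the same route as the paper: invoke the deviatoric backward-stability hypothesis, Taylor-expand $J_3$ using the Jacobian of Lemma~\ref{lem:J3-jacobian}, replace $\delta\mathbf A$ by $\dev(\delta\mathbf A)$ inside the inner product, and apply Cauchy--Schwarz. The paper performs precisely this computation in the displayed chain \eqref{eq:J3-fwd-error} and then declares the lemma to follow directly, so the structure is identical; making the tracelessness/orthogonality step explicit (the Jacobian lies in the image of $\dev$, hence is traceless, hence Frobenius-orthogonal to the spherical part of $\delta\mathbf A$) is a genuine clarification of what the paper leaves implicit. One small wobble in your write-up: you cannot conclude $\mathcal O(\norm{\delta\mathbf A}^2) = \mathcal O(\epsmach^2)$ from the deviatoric hypothesis plus norm equivalence alone, because the hypothesis places no bound on $\tr(\delta\mathbf A)$ and norm equivalence cannot manufacture one. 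The clean repair is to observe that $J_3 = \det(\dev(\cdot))$ depends on its argument only through the deviatoric part, so $J_3(\mathbf A + \delta\mathbf A) = J_3\bigl(\dev(\mathbf A) + \dev(\delta\mathbf A)\bigr)$ exactly; the Taylor remainder is therefore $\mathcal O(\norm{\dev(\delta\mathbf A)}^2) = \mathcal O(\epsmach^2)$ with no appeal to $\norm{\delta\mathbf A}$ at all --- which also tightens the paper's own slightly loose $\mathcal O(\norm{\delta\mathbf A}^2)$ in \eqref{eq:J3-fwd-error}.
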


\begin{proof}
    Follows directly from Eq.~\eqref{eq:J3-fwd-error}.
\end{proof}

\begin{figure}[htbp]
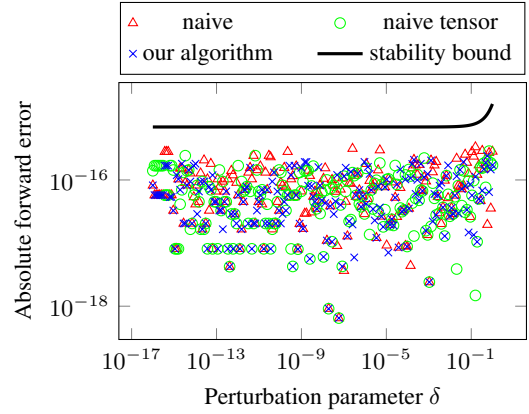
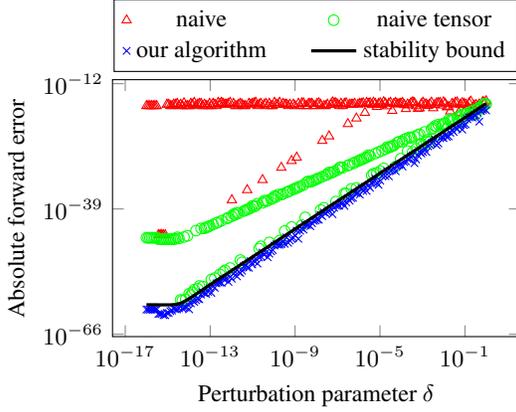
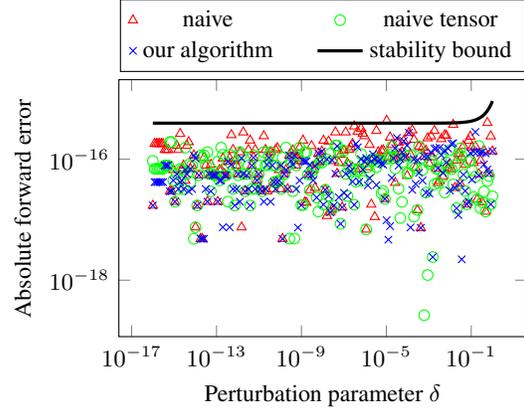
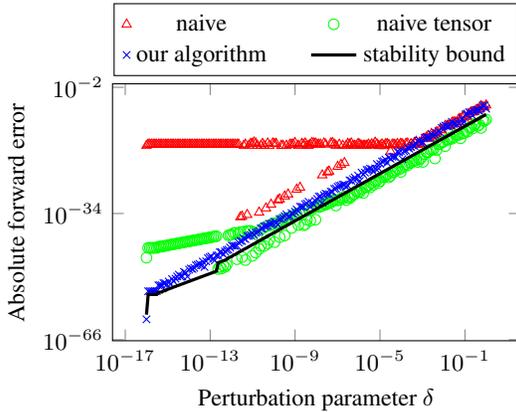
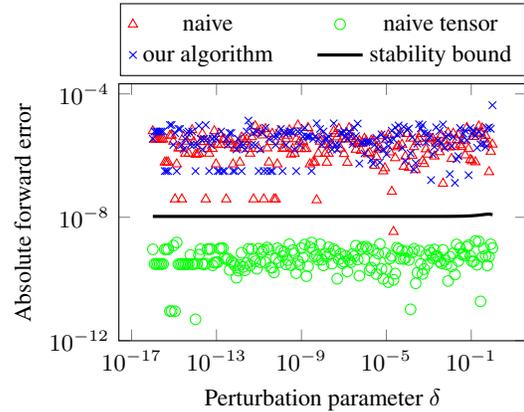

    \centering
    \begin{subfigure}[t]{0.49\linewidth}
        \centering
        \ErrorPlot{results/invariants-double_lim_J3J2-u1.dat}{J3}
        \caption{Forward error for the benchmark case in Fig. \ref{fig:J2-J3-benchmark-d2} with transformation matrix $\mathbf U_1$ (well-conditioned, $\kappa_2 = 2$).}
        \label{fig:J3-error-d2-u1}
    \end{subfigure}
    \hfill
    \begin{subfigure}[t]{0.49\linewidth}
        \centering
        \ErrorPlot{results/invariants-single_lim_disc_t-u1.dat}{J3}
        \caption{Forward error for the benchmark case in Fig. \ref{fig:J2-J3-benchmark-d1} with transformation matrix $\mathbf U_1$ (well-conditioned, $\kappa_2 = 2$).}
        \label{fig:J3-error-d1-u1}
    \end{subfigure}
    
    \begin{subfigure}[t]{0.49\linewidth}
        \ErrorPlot{results/invariants-double_lim_J3J2-symm.dat}{J3}
        \caption{Forward error for the benchmark case in Fig. \ref{fig:J2-J3-benchmark-d2} with transformation matrix $\mathbf U_\text{symm}$ (orthogonal, $\kappa_2 = 1$).}
        \label{fig:J3-error-d2-symm}
    \end{subfigure}
    \hfill
    \begin{subfigure}[t]{0.49\linewidth}
        \ErrorPlot{results/invariants-single_lim_disc_t-symm.dat}{J3}
        \caption{Forward error for the benchmark case in Fig. \ref{fig:J2-J3-benchmark-d1} with transformation matrix $\mathbf U_\text{symm}$ (orthogonal, $\kappa_2 = 1$).}
        \label{fig:J3-error-d1-symm}
    \end{subfigure}
    
    \begin{subfigure}[t]{0.49\linewidth}
        \ErrorPlot{results/invariants-double_lim_J3J2-u2.dat}{J3}
        \caption{Forward error for the benchmark case in Fig. \ref{fig:J2-J3-benchmark-d2} with transformation matrix $\mathbf U_2(\gamma)$ (ill-conditioned eigenbasis).}
        \label{fig:J3-error-d2-u2}
    \end{subfigure}
    \hfill
    \begin{subfigure}[t]{0.49\linewidth}
        \ErrorPlot{results/invariants-single_lim_disc_t-u2.dat}{J3}
        \caption{Forward error for the benchmark case in Fig. \ref{fig:J2-J3-benchmark-d1} with transformation matrix $\mathbf U_2(\gamma)$ (ill-conditioned eigenbasis).}
        \label{fig:J3-error-d1-u2}
    \end{subfigure}
    
    \caption{Numerical stability analysis for the invariant $J_3$.}
    \label{fig:J3-analysis}
\end{figure}

\subsection{Discussion of numerical benchmarks}

Three different implementations of $J_3$ are benchmarked in Fig.~\ref{fig:J3-analysis}: \textit{naive}, \textit{naive tensor}, and \textit{present}.

\emph{Naive} uses Algorithm~\ref{alg:J3-naive}, an unrolled polynomial expression (monomial sum) with no
structure-exploiting rearrangement of terms, so it is expected to be numerically unstable. The
second implementation, \emph{naive tensor}, is based on the definition $J_3 = \det(\dev(\mathbf A))$,
where all operations are computed via a tensor implementation in NumPy. The algorithm is listed in
Algorithm~\ref{alg:J3-tensor}, where the trace is computed using \texttt{numpy.trace}
\citep{Numpy2025Trace} and matrix multiplication using \texttt{numpy.matmul}
\citep{Numpy2025Matmul}.

\begin{minipage}{1.0\linewidth}
    \begin{algorithm}[H]
        \begin{algorithmic}
            \Require $\mathbf A \in \mathbb{R}^{3 \times 3}$
            \State $J_3 = \frac{1}{27} (2A_{00}^3 - 3A_{00}^2A_{11} - 3A_{00}^2A_{22} + 9A_{00}A_{01}A_{10}$
                \State $\qquad + 9A_{00}A_{02}A_{20} - 3A_{00}A_{11}^2 + 12A_{00}A_{11}A_{22} - 18A_{00}A_{12}A_{21}$
                \State $\qquad - 3A_{00}A_{22}^2 + 9A_{01}A_{10}A_{11} - 18A_{01}A_{10}A_{22} + 27A_{01}A_{12}A_{20}$
                \State $\qquad + 27A_{02}A_{10}A_{21} - 18A_{02}A_{11}A_{20} + 9A_{02}A_{20}A_{22} + 2A_{11}^3$
                \State $\qquad - 3A_{11}^2A_{22} + 9A_{11}A_{12}A_{21} - 3A_{11}A_{22}^2 + 9A_{12}A_{21}A_{22} + 2A_{22}^3)$
            \State \Return $J_3$
        \end{algorithmic}
        \caption{\emph{Naive} evaluation of the invariant $J_3$}
        \label{alg:J3-naive}
    \end{algorithm}
\end{minipage}

\begin{minipage}{1.0\linewidth}
    \begin{algorithm}[H]
        \begin{algorithmic}
            \Require $\mathbf A \in \mathbb{R}^{3 \times 3}$
            \State $\mathbf S = \mathbf A - \frac{1}{3} \tr(\mathbf A) \mathbf I$
            \State $J_3 = \det(\mathbf S)$
            \State \Return $J_3$
        \end{algorithmic}
        \caption{\emph{Naive tensor} evaluation of the invariant $J_3$}
        \label{alg:J3-tensor}
    \end{algorithm}
\end{minipage}

The \textit{naive} implementation shows the largest forward errors in all benchmark cases, as
seen in Fig.~\ref{fig:J3-analysis}. The error is so large that for $\delta \approx 10^{-16}$ and the
well-conditioned case in Fig.~\ref{fig:J3-error-d2-u1} the computed $J_3$ keeps an error of order
$10^{-16}$, which is 48 orders of magnitude larger than what a forward stable algorithm should produce.

The \textit{naive tensor} implementation is more accurate than the \textit{naive} one, but still shows large forward
errors, especially in Figs.~\ref{fig:J3-error-d2-u1} and \ref{fig:J3-error-d2-symm}, for the same reasons
as explained above for the $J_2$ invariant. It achieves the best accuracy for badly conditioned
cases in Figs.~\ref{fig:J3-error-d2-u2} and \ref{fig:J3-error-d1-u2}, probably due to the
implementation of the determinant in NumPy based on the numerically stable LU factorization \texttt{DGETRF}
from LAPACK \citep{Numpy2025Det,Anderson1999Lapack}.

For the \textit{present} algorithm (Algorithm~\ref{alg:J3-stable}), the well-conditioned cases with
$\mathbf U = \mathbf U_1$ (Figs.~\ref{fig:J3-error-d2-u1} and \ref{fig:J3-error-d1-u1}) and orthogonal cases with
$\mathbf U = \mathbf U_\text{symm}$ (Figs.~\ref{fig:J3-error-d2-symm} and \ref{fig:J3-error-d1-symm}) show that
the stability bound from Lem.~\ref{lem:J3-fwd-stability} is satisfied. This is not true for the most
challenging case of the transformation matrix being nonorthogonal and nearly singular,
$\mathbf U = \mathbf U_2(\gamma)$. The $\gamma$ parameter was chosen as $\gamma = 10^{-3}$, which leads to
condition number $\kappa_2(\mathbf U_2) \approx 9 \times 10^3$. In this case, the absolute forward
error exceeds the stability bound in Figs.~\ref{fig:J3-error-d2-u2} and \ref{fig:J3-error-d1-u2}.
This suggests that Algorithm~\ref{alg:J3-stable} is not backward stable in the relative deviatoric
sense for all matrices $\mathbf A$, but only conditionally stable, depending on the condition number
of the eigenbasis.

Note that the included stability bound plots (solid lines) in Fig. \ref{fig:J3-analysis} are based
only on the lowest order term from Eq.~\eqref{eq:J3-fwd-stability}, i.e.,
$\norm{\dev (\cof(\dev(\mathbf A)))}_F \norm{\dev(\mathbf A)}_F \epsmach$. It could be shown that the higher
order terms are negligible compared to the lowest order term in all benchmark cases.

\section{Discriminant $\Delta$}

\begin{lemma}
    \label{lem:disc-jacobian}
    The Jacobian of the discriminant $\Delta$ is given by
    \begin{equation}
        \mathbf J_{\Delta} = \dev ( 12 J_2^2 \mathbf A^\T - 54 J_3 \cof(\dev(\mathbf A))).
        \label{eq:disc-jacobian}
    \end{equation}
\end{lemma}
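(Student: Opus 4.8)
The plan is to obtain $\mathbf J_\Delta$ from the chain rule applied to the defining relation $\Delta = 4 J_2^3 - 27 J_3^2$, feeding in the Jacobians of $J_2$ and $J_3$ already computed in Lemmas~\ref{lem:J2-condition} and~\ref{lem:J3-jacobian}. For an arbitrary direction $\delta \mathbf A$,
\begin{equation}
    \frac{\partial}{\partial \mathbf A}\Delta[\delta \mathbf A]
    = 12 J_2^2\,\frac{\partial}{\partial \mathbf A}J_2[\delta \mathbf A]
      - 54 J_3\,\frac{\partial}{\partial \mathbf A}J_3[\delta \mathbf A]
    = \inner{12 J_2^2\,\dev(\mathbf A)^\T - 54 J_3\,\dev(\cof(\dev(\mathbf A)))}{\delta \mathbf A},
\end{equation}
using $\frac{\partial}{\partial \mathbf A}J_2[\delta \mathbf A]=\inner{\dev(\mathbf A)^\T}{\delta \mathbf A}$, $\frac{\partial}{\partial \mathbf A}J_3[\delta \mathbf A]=\inner{\dev(\cof(\dev(\mathbf A)))}{\delta \mathbf A}$, and bilinearity of the Frobenius inner product. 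Reading off the first slot identifies $\mathbf J_\Delta = 12 J_2^2\,\dev(\mathbf A)^\T - 54 J_3\,\dev(\cof(\dev(\mathbf A)))$.

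The only remaining work is to bring this into the stated form. I would note that the deviatoric operator commutes with transposition, $\dev(\mathbf A)^\T = \mathbf A^\T - \tfrac13\tr(\mathbf A)\mathbf I = \dev(\mathbf A^\T)$, since $\tr(\mathbf A^\T)=\tr(\mathbf A)$ and $\mathbf I^\T=\mathbf I$. Then linearity of $\dev$, together with the fact that $J_2^2$ and $J_3$ are scalars that can be pulled through $\dev$, lets the two trace-free summands be written under a single $\dev$:
\begin{equation}
    \mathbf J_\Delta
    = 12 J_2^2\,\dev(\mathbf A^\T) - 54 J_3\,\dev(\cof(\dev(\mathbf A)))
    = \dev\!\left(12 J_2^2\,\mathbf A^\T - 54 J_3\,\cof(\dev(\mathbf A))\right),
\end{equation}
which is exactly~\eqref{eq:disc-jacobian}. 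The outer $\dev$ is genuinely doing work here: it is what replaces the non-trace-free $\mathbf A^\T$ by $\dev(\mathbf A)^\T$ and $\cof(\dev(\mathbf A))$ by its deviatoric part, and it is the compact way to record a sum of two deviatoric terms.

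I do not expect a real obstacle; the lemma is a one-line consequence of the chain rule once the $J_2$ and $J_3$ Jacobians are available. The two small points to keep straight are (i) that $J_2$ and $J_3$ are themselves functions of $\mathbf A$, so differentiating $4 J_2^3 - 27 J_3^2$ produces the multipliers $12 J_2^2$ and $-54 J_3$ (frozen at the evaluation point), and (ii) the commutation identity $\dev(\cdot)^\T = \dev\big((\cdot)^\T\big)$, which is the algebraic fact that makes the compact closed form valid.
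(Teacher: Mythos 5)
Your proof is correct and follows essentially the same route as the paper's (very terse) proof: chain rule on $\Delta = 4J_2^3 - 27J_3^2$ using the Jacobians from Lemmas~\ref{lem:J2-condition} and~\ref{lem:J3-jacobian}, then pulling the scalars inside and the deviatoric operator outside by linearity, with $\dev(\mathbf A)^\T = \dev(\mathbf A^\T)$ justifying the compact form. You have simply spelled out the steps the paper leaves implicit.
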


\begin{proof}
    Follows from the definition of the discriminant and the Jacobians of the invariants $J_2$, Eq.~\eqref{eq:J2-jacobian},
    and $J_3$, Eq.~\eqref{eq:J3-jacobian}. The deviatoric operator is then moved outside by linearity.
\end{proof}

Motivated by the observations in Section~\ref{sec:J2-stability} and our previous work, we propose
the following algorithm for evaluating $\Delta$. We briefly recall the main ideas from \citet{Habera2021Symbolic}.
We rely on an expression for the discriminant $\Delta$ as the determinant of a matrix
$\mathbf B \in \mathbb R^{3 \times 3}$ whose entries are invariants of powers of the matrix
$\mathbf A$, see \citet[Lemma 1.]{Parlett2002Discriminant}. Specifically, we have
\begin{equation}
    \Delta = \det(\mathbf B), \quad \text{where} \quad B_{ij} = \inner{\mathbf A^{i-1}}{(\mathbf A^{j-1})^\T}
    \label{eq:disc-determinant}
\end{equation}
for $i, j = 1, 2, 3$. In addition, the matrix $\mathbf B = \mathbf X \mathbf Y$ exhibits a factorization into two matrices
$\mathbf X \in \mathbb R^{3 \times 9}$ and $\mathbf Y \in \mathbb R^{9 \times 3}$, which are built
from column- and row-stacked powers of $\mathbf A$, respectively. Using the Cauchy--Binet formula,
the determinant $\det(\mathbf X \mathbf Y)$ can be expressed as a sum of 14 condensed terms,
which we term the \emph{sum-of-products} formula,
\begin{equation}
    \Delta = \sum_{i=1}^{14} w_i u_i v_i,
    \label{eq:sop-disc}
\end{equation}
where $\mathbf u = (u_1, \dots, u_{14})$ and $\mathbf v = (v_1, \dots, v_{14})$ are auxiliary
vectors built from products of entries of $\mathbf A$ and its transpose, respectively, and
$\mathbf w = (w_1, \dots, w_{14})$ is a vector of integer weights. The important property of the
sum-of-products formula is that both factors $\mathbf u$ and $\mathbf v$ approach zero as the matrix $\mathbf A$
approaches a matrix with multiple eigenvalues. This is related to the Cayley--Hamilton theorem and the
fact that the columns of $\mathbf X$ and rows of $\mathbf Y$ become linearly dependent in such
situations, since $\mathbf A$ satisfies its own minimal polynomial.

\begin{minipage}{1.0\linewidth}
    \begin{algorithm}[H]
        \begin{algorithmic}
            \Require $\mathbf A \in \mathbb{R}^{3 \times 3}$
            \State $d_{0} = A_{00} - A_{11}$, $d_{1} = A_{00} - A_{22}$, $d_{2} = A_{11} - A_{22}$ \Comment{Diagonal differences}
            \State $\mathbf u = \Call{dx}{\mathbf A, d_0, d_1, d_2}$  \Comment{Auxiliary vector}
            \State $\mathbf v = \Call{dx}{\mathbf A^\T, d_0, d_1, d_2}$  \Comment{Auxiliary vector}
            \State $\mathbf w = (9, 6, 6, 6, 8, 8, 8, 2, 2, 2, 2, 2, 2, 1)$ \Comment{Weights}
            \State $\Delta = \sum_{i=1}^{14} w_i\, u_i\, v_i$ \Comment{Sum of products}
            \State \Return $\Delta$
            \Function{dx}{$\mathbf M, d_0, d_1, d_2$}
            \State $r_1 = M_{01} M_{12} M_{20} - M_{02} M_{10} M_{21}$
            \State $r_2 = -M_{01} M_{02} d_{2} + M_{01}^2 M_{12} - M_{02}^2 M_{21}$
            \State $r_3 = M_{01} M_{21} d_{1} - M_{01}^2 M_{20} + M_{02} M_{21}^2$
            \State $r_4 = M_{02} M_{12} d_{0} + M_{01} M_{12}^2 - M_{02}^2 M_{10}$
            \State $r_5 = M_{01} M_{12} d_{1} - M_{01} M_{02} M_{10} + M_{02} M_{12} M_{21}$
            \State $r_6 = M_{02} M_{21} d_{0} - M_{01} M_{02} M_{20} + M_{01} M_{12} M_{21}$
            \State $r_7 = -M_{02} M_{10} d_{2} + M_{01} M_{10} M_{12} - M_{02} M_{12} M_{20}$
            \State $r_8 = M_{12} d_{0} d_{1} - M_{02} M_{10} d_{1} + M_{01} M_{10} M_{12} - M_{12}^2 M_{21}$
            \State $r_9 = M_{12} d_{0} d_{1} - M_{02} M_{10} d_{0} + M_{02} M_{12} M_{20} - M_{12}^2 M_{21}$
            \State $r_{10} = M_{01} d_{1} d_{2} - M_{02} M_{21} d_{2} + M_{01} M_{02} M_{20} - M_{01}^2 M_{10}$
            \State $r_{11} = M_{01} d_{1} d_{2} + M_{02} M_{21} d_{1} + M_{01} M_{12} M_{21} - M_{01}^2 M_{10}$
            \State $r_{12} = -M_{02} d_{0} d_{2} + M_{01} M_{12} d_{0} + M_{02} M_{12} M_{21} - M_{02}^2 M_{20}$
            \State $r_{13} = M_{02} d_{0} d_{2} + M_{01} M_{12} d_{2} - M_{01} M_{02} M_{10} + M_{02}^2 M_{20}$
            \State $r_{14} = d_{0} d_{1} d_{2} - M_{01} M_{10} d_{0} + M_{02} M_{20} d_{1} - M_{12} M_{21} d_{2}$
            \State $\mathbf r = (r_1,\dots,r_{14})$
            \State \Return $\mathbf r$
            \EndFunction
        \end{algorithmic}
        \caption{Evaluation of the discriminant invariant $\Delta$}
        \label{alg:disc-stable}
    \end{algorithm}
\end{minipage}

Algorithm \ref{alg:disc-stable} implements the sum-of-products formula \eqref{eq:sop-disc} for
evaluating $\Delta$. In addition to the Cauchy--Binet factorization, it incorporates the computation
of diagonal differences to improve numerical stability near $J_2 = 0$, similar to the stable
algorithms for $J_2$ and $J_3$. Note that the individual factors $r_i$ in the auxiliary vectors
$\mathbf u$ and $\mathbf v$ contain the diagonal elements of the matrix $\mathbf A$ only in the
form of their differences.

\begin{lemma}
    \label{lem:disc-fwd-stability}
    Any deviatoric backward stable algorithm for evaluating $\Delta$ must be forward stable in the
    sense that the absolute forward error must satisfy
    \begin{equation}
        \abs{\fl(\Delta) - \Delta} \leq C \norm{\dev ( 12 J_2^2 \mathbf A^\T - 54 J_3 \cof(\dev(\mathbf A)))} \norm{\dev(\mathbf A)} \epsmach + \mathcal O(\epsmach^2).
    \end{equation}
\end{lemma}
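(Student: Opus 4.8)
The plan is to mirror the argument already used for Lemma~\ref{lem:J3-fwd-stability}, now invoking the Jacobian of $\Delta$ from Lemma~\ref{lem:disc-jacobian} in place of that of $J_3$. First I would invoke the hypothesis of deviatoric backward stability: for the algorithm in question there exists a perturbation $\delta\mathbf A$ with $\fl(\Delta(\mathbf A)) = \Delta(\mathbf A + \delta\mathbf A)$ and $\norm{\dev(\delta\mathbf A)} \le C\,\norm{\dev(\mathbf A)}\,\epsmach$. A first-order Taylor expansion of $\Delta$ about $\mathbf A$, using $\mathbf J_\Delta$ from Eq.~\eqref{eq:disc-jacobian}, then gives
\begin{equation}
    \fl(\Delta) - \Delta = \inner{\mathbf J_\Delta(\mathbf A)}{\delta\mathbf A} + \mathcal O(\norm{\delta\mathbf A}^2).
\end{equation}

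The key observation is that $\mathbf J_\Delta = \dev(12 J_2^2 \mathbf A^\T - 54 J_3 \cof(\dev(\mathbf A)))$ is trace-free, so in the Frobenius inner product the spherical part of $\delta\mathbf A$ is annihilated and $\inner{\mathbf J_\Delta}{\delta\mathbf A} = \inner{\mathbf J_\Delta}{\dev(\delta\mathbf A)}$. Applying Cauchy--Schwarz and then the deviatoric backward error bound turns the right-hand side into $C\,\norm{\mathbf J_\Delta(\mathbf A)}\,\norm{\dev(\mathbf A)}\,\epsmach$ plus a quadratic remainder, which upon writing out $\mathbf J_\Delta$ is exactly the claimed estimate. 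This is the same chain of inequalities displayed in Eq.~\eqref{eq:J3-fwd-error}, with the $J_3$ Jacobian replaced by the $\Delta$ Jacobian.

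It remains to absorb the $\mathcal O(\norm{\delta\mathbf A}^2)$ term into $\mathcal O(\epsmach^2)$. Since $\Delta$ depends on $\mathbf A$ only through $\dev(\mathbf A)$, its second directional derivative likewise factors through $\dev(\delta\mathbf A)$, so the remainder is in fact $\mathcal O(\norm{\dev(\delta\mathbf A)}^2) = \mathcal O(\norm{\dev(\mathbf A)}^2 \epsmach^2)$; equivalently one may just work with the deviatoric component throughout, exactly as in the remark following Corollary~\ref{cor:J2-rel-bwd-stability}. The main obstacle I anticipate is precisely this point: one must be careful to argue that the \emph{deviatoric} backward error, rather than the full normwise backward error, is what controls both the first- and second-order contributions — otherwise a large spherical part of $\delta\mathbf A$ would spoil the bound. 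Beyond that, the proof is routine and requires no computation not already carried out for $J_3$.
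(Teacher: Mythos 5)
Your argument is exactly the one the paper intends: its proof is a one-line pointer to the derivation in Eq.~\eqref{eq:J3-fwd-error} with the $J_3$ Jacobian replaced by $\mathbf J_\Delta$ from Eq.~\eqref{eq:disc-jacobian}, which is precisely the chain (deviatoric backward stability, first-order Taylor, trace-free Jacobian to drop the spherical part, Cauchy--Schwarz) you spell out. Your extra remark about the quadratic remainder factoring through $\dev(\delta\mathbf A)$ is a correct and welcome clarification of a point the paper leaves implicit.
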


\begin{proof}
    This follows directly from the backward error analysis and Eq.~\eqref{eq:disc-jacobian}.
\end{proof}
\begin{figure}[htbp]
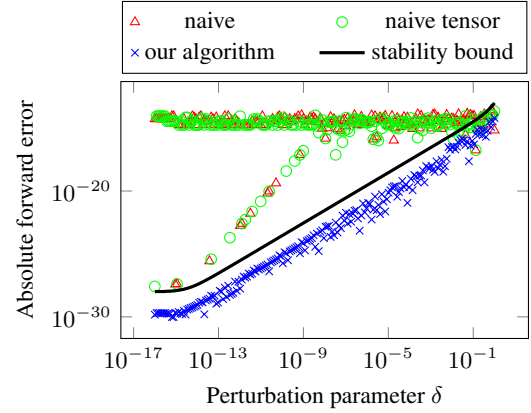
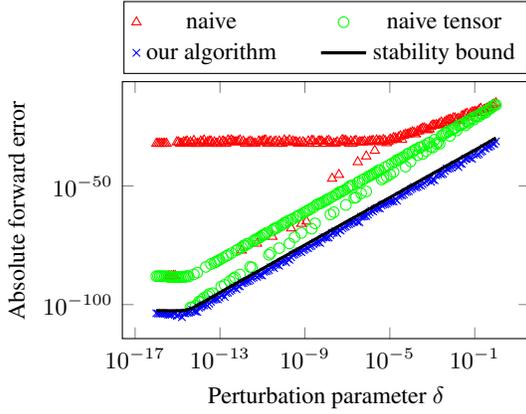
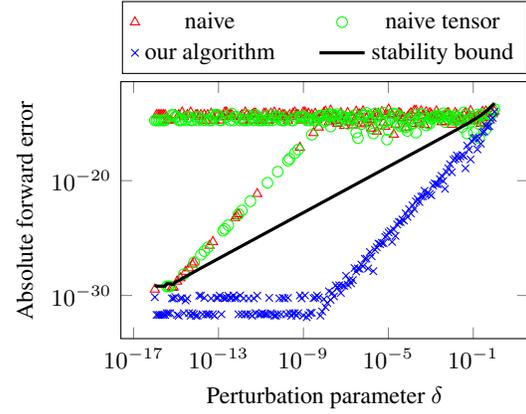
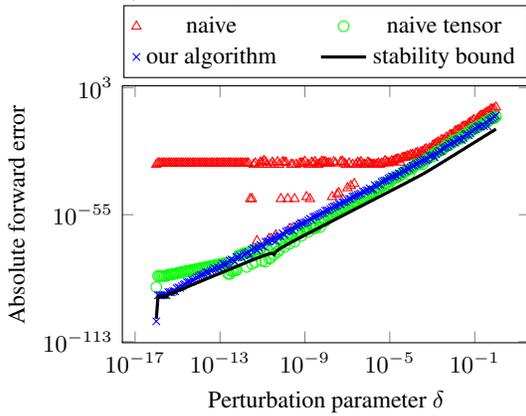
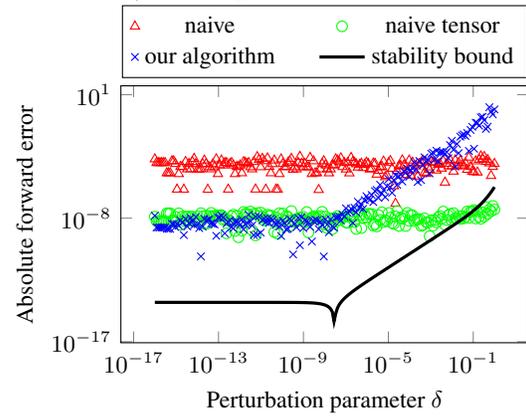

    \centering
    \begin{subfigure}[t]{0.49\linewidth}
        \centering
        \ErrorPlot{results/invariants-double_lim_J3J2-u1.dat}{disc}
        \caption{Forward error for the benchmark case in Fig.~\ref{fig:J2-J3-benchmark-d2} with transformation matrix
            $\mathbf U_1$ (well-conditioned, $\kappa_2 = 2$).}
        \label{fig:disc-error-d2-u1}
    \end{subfigure}
    \hfill
    \begin{subfigure}[t]{0.49\linewidth}
        \centering
        \ErrorPlot{results/invariants-single_lim_disc_t-u1.dat}{disc}
        \caption{Forward error for the benchmark case in Fig.~\ref{fig:J2-J3-benchmark-d1} with
            transformation matrix $\mathbf U_1$ (well-conditioned, $\kappa_2 = 2$).}
        \label{fig:disc-error-d1-u1}
    \end{subfigure}
    
    \begin{subfigure}[t]{0.49\linewidth}
        \ErrorPlot{results/invariants-double_lim_J3J2-symm.dat}{disc}
        \caption{Forward error for the benchmark case in Fig.~\ref{fig:J2-J3-benchmark-d2} with
            transformation matrix $\mathbf U_\text{symm}$ (orthogonal, $\kappa_2 = 1$).}
        \label{fig:disc-error-d2-symm}
    \end{subfigure}
    \hfill
    \begin{subfigure}[t]{0.49\linewidth}
        \ErrorPlot{results/invariants-single_lim_disc_t-symm.dat}{disc}
        \caption{Forward error for the benchmark case in Fig.~\ref{fig:J2-J3-benchmark-d1} with
            transformation matrix $\mathbf U_\text{symm}$ (orthogonal, $\kappa_2 = 1$).}
        \label{fig:disc-error-d1-symm}
    \end{subfigure}
    
    \begin{subfigure}[t]{0.49\linewidth}
        \ErrorPlot{results/invariants-double_lim_J3J2-u2.dat}{disc}
        \caption{Forward error for the benchmark case in Fig.~\ref{fig:J2-J3-benchmark-d2} with transformation matrix
            $\mathbf U_2(\gamma)$ (ill-conditioned eigenbasis).}
        \label{fig:disc-error-d2-u2}
    \end{subfigure}
    \hfill
    \begin{subfigure}[t]{0.49\linewidth}
        \ErrorPlot{results/invariants-single_lim_disc_t-u2.dat}{disc}
        \caption{Forward error for the benchmark case in Fig.~\ref{fig:J2-J3-benchmark-d1} with
            transformation matrix $\mathbf U_2(\gamma)$ (ill-conditioned eigenbasis).}
        \label{fig:disc-error-d1-u2}
    \end{subfigure}
    
    \caption{Numerical stability analysis for the discriminant $\Delta$.}
    \label{fig:disc-analysis}
\end{figure}

\subsection{Discussion of numerical benchmarks}

Numerical benchmarks evaluating the absolute forward error of Algorithm \ref{alg:disc-stable} are
shown in Fig.~\ref{fig:disc-analysis}.

Similar to the $J_2$ and $J_3$ invariants, we benchmark three implementations for evaluating $\Delta$
in Fig.~\ref{fig:disc-analysis}. \textit{Naive} and \textit{naive tensor} implementations are based on
the direct evaluation of the formula $\Delta = 4 J_2^3 - 27 J_3^2$, where $J_2$ and $J_3$ are
computed using the \textit{naive} and \textit{naive tensor} algorithms, respectively.

The \textit{naive} implementation is clearly unstable in all benchmark cases, with the forward error exceeding
the stability bound by several orders of magnitude. The \textit{naive tensor} implementation is more stable,
but still fails to achieve the stability bound.

The proposed algorithm (Algorithm~\ref{alg:disc-stable}) is stable for benchmarks with a well-conditioned
eigenbasis (Figs.~\ref{fig:disc-error-d2-u1} and \ref{fig:disc-error-d1-u1}) and an orthogonal
eigenbasis (Figs.~\ref{fig:disc-error-d2-symm} and \ref{fig:disc-error-d1-symm}). In these cases,
the forward error is close to the stability bound. However, the forward error increases for the
benchmark with an ill-conditioned eigenbasis (Figs.~\ref{fig:disc-error-d2-u2} and
\ref{fig:disc-error-d1-u2}), exceeding the stability bound, and even those produced by the \textit{naive}
and \textit{naive tensor} algorithms.

Note that the included stability bound plots (solid lines) in Fig. \ref{fig:disc-analysis} are based
only on the lowest order term from Lemma~\ref{lem:disc-fwd-stability}, i.e.,
$\norm{\dev(12 J_2^2 \mathbf A^\T - 54 J_3 \cof(\dev(\mathbf A)))}_F \norm{\dev(\mathbf A)}_F \epsmach$. It could be shown that the higher
order terms are negligible compared to the lowest order term in all benchmark cases.

\section{Eigenvalues}
\label{sec:eigvals-stability}

For eigenvalues, there exists a classical perturbation result called the Bauer--Fike theorem \citep{Bauer1960Norms},
which provides an absolute bound on the perturbation of eigenvalues of diagonalizable matrices.

\begin{theorem}[Bauer--Fike, 1960]
    \label{thm:bauer-fike}
    Let $\mathbf A \in \mathbb C^{n \times n}$ be diagonalizable, i.e., $\mathbf A = \mathbf U \mathbf D \mathbf U^{-1}$,
    where $\mathbf D$ is diagonal and $\mathbf U$ is invertible. Let $\lambda$ be an eigenvalue of $\mathbf A$.
    Then there exists an eigenvalue $\tilde{\lambda}$ of $\mathbf A + \delta \mathbf A$ such that
    \begin{equation}
        |\tilde{\lambda} - \lambda| \leq \kappa_p(\mathbf U) \norm{\delta \mathbf A}_p,
    \end{equation}
    where $\kappa_p(\mathbf U) = \norm{\mathbf U}_p \norm{\mathbf U^{-1}}_p$ is the $p$-norm condition number of the eigenbasis.
\end{theorem}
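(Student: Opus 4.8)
The plan is to isolate one resolvent estimate, apply it along a homotopy, and then pin an eigenvalue near the prescribed $\lambda$ by a winding-number argument. First I would dispose of the trivial case $\delta\mathbf A = \mathbf 0$, where $\tilde\lambda = \lambda$ works. For $\delta\mathbf A \ne \mathbf 0$ set $r \coloneqq \kappa_p(\mathbf U)\,\norm{\delta\mathbf A}_p$ and introduce the homotopy $\mathbf A(t) \coloneqq \mathbf A + t\,\delta\mathbf A$, $t \in [0,1]$. The key estimate is the classical one behind Bauer--Fike: if $\mu \notin \sigma(\mathbf A)$ and $\mathbf A(t)w = \mu w$ with $w \ne 0$, then $\mathbf U(\mathbf D - \mu\mathbf I)\mathbf U^{-1}w = -t\,\delta\mathbf A\,w$, so $w = -t\,\mathbf U(\mathbf D - \mu\mathbf I)^{-1}\mathbf U^{-1}\delta\mathbf A\,w$; taking the induced $p$-norm and using that $\norm{(\mathbf D - \mu\mathbf I)^{-1}}_p = 1/\min_j\abs{\lambda_j - \mu}$ for a diagonal matrix, this forces $\min_j\abs{\lambda_j - \mu} \le t\,\kappa_p(\mathbf U)\,\norm{\delta\mathbf A}_p \le r$. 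Hence $\sigma(\mathbf A(t)) \subseteq \bigcup_j \overline D(\lambda_j, t r)$ for every $t \in [0,1]$.

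Next I would convert this covering into the stated conclusion. Write $\lambda = \lambda_{j_0}$ with algebraic multiplicity $m \ge 1$ and let $g \coloneqq \min_{j:\lambda_j\ne\lambda}\abs{\lambda_j - \lambda}$ (set $g = +\infty$ if $\lambda$ is the only eigenvalue). Fix a small $\varepsilon > 0$ and take the circle $\Gamma \coloneqq \partial D(\lambda, r + \varepsilon)$. In the regime $r < g/2$ — which is exactly the one relevant here, since $\norm{\delta\mathbf A}_p$ is of order $\epsmach$ while eigenvalue gaps are order one unless eigenvalues genuinely coalesce — the covering shows $\Gamma \cap \sigma(\mathbf A(t)) = \emptyset$ for all $t$: the disk $\overline D(\lambda, t r) \subseteq \overline D(\lambda, r)$ stays strictly inside $\Gamma$, while every other disk $\overline D(\lambda_j, t r)$ stays strictly outside it. Then $N(t) \coloneqq \frac{1}{2\pi i}\oint_\Gamma \tr\bigl((z\mathbf I - \mathbf A(t))^{-1}\bigr)\,dz$ counts, with algebraic multiplicity, the eigenvalues of $\mathbf A(t)$ inside $\Gamma$; since $\Gamma$ avoids the spectrum the integrand is continuous in $t$, so $N$ is continuous, and being integer-valued on the connected set $[0,1]$ it is constant. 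As $N(0)$ equals the multiplicity $m$ of $\lambda$, we get $N(1) = m \ge 1$, so $\mathbf A + \delta\mathbf A$ has an eigenvalue $\tilde\lambda$ with $\abs{\tilde\lambda - \lambda} < r + \varepsilon$; letting $\varepsilon \downarrow 0$ and using compactness of the spectrum gives $\tilde\lambda \in \sigma(\mathbf A + \delta\mathbf A)$ with $\abs{\tilde\lambda - \lambda} \le r = \kappa_p(\mathbf U)\,\norm{\delta\mathbf A}_p$, as claimed.

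The main obstacle is conceptual rather than computational. The raw resolvent estimate naturally proves the \emph{converse} direction — every eigenvalue of $\mathbf A + \delta\mathbf A$ lies within $r$ of some eigenvalue of $\mathbf A$ — and turning it into the stated direction genuinely requires the homotopy together with the separation of the eigenvalue disks, because $\mathbf A + \delta\mathbf A$ need not be diagonalizable and one cannot simply interchange the roles of $\mathbf A$ and $\mathbf A + \delta\mathbf A$. The delicate point is the overlapping-disk case $r \ge g/2$: then the spectrum may cross the circle $\Gamma$ around $\lambda$ during the homotopy, and the clean statement must be replaced by its connected-component refinement, namely that the component of $\bigcup_j \overline D(\lambda_j, r)$ containing $\lambda$ holds as many eigenvalues of $\mathbf A + \delta\mathbf A$ as of $\mathbf A$. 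For the error analysis in this paper the disjoint-disk regime always applies, so the argument above suffices; the coalescing case is precisely the situation that the rest of the paper handles through the invariants and the discriminant rather than through this perturbation bound.
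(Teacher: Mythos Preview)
The paper does not actually prove this theorem; its proof reads, in full, ``See, e.g., [Golub and Van Loan, Thm.~7.2.2].'' Your proposal therefore goes well beyond the paper's own treatment, and the resolvent estimate in your first paragraph is exactly the standard textbook argument behind that citation.

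You are also right to flag that the paper's formulation is the \emph{reverse} of the usual Bauer--Fike statement: the cited theorem bounds the distance from each eigenvalue of $\mathbf A + \delta\mathbf A$ to the spectrum of $\mathbf A$, not the other way round, and one cannot simply swap roles because $\mathbf A + \delta\mathbf A$ need not be diagonalizable. Your homotopy and winding-number argument correctly upgrades the forward direction to the reverse one under the separation hypothesis $r < g/2$, and your caveat about the overlapping-disk regime is accurate --- there only the connected-component eigenvalue count survives, and the clean pointwise inequality with $\kappa_p(\mathbf U)$ is not guaranteed. Since the paper only invokes the theorem to derive the first-order bound in Lemma~\ref{lem:eigvals-fwd-stability} for backward perturbations of size $\mathcal O(\epsmach)$, your restricted argument is adequate for the application; but read literally for arbitrary $\delta\mathbf A$, the paper's statement is stronger than what its cited reference actually proves, and your analysis exposes that gap rather than creating one.
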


\begin{proof}
    See, e.g., \citet[Thm. 7.2.2]{Golub2013Matrix}.
\end{proof}

We use the Bauer--Fike theorem to derive a stability bound for eigenvalue computation, summarized in
the following lemma.

\begin{lemma}
    \label{lem:eigvals-fwd-stability}
    Any backward stable algorithm for evaluating the eigenvalues of a real diagonalizable matrix $\mathbf
        A \in \mathbb R^{3 \times 3}$ with real spectrum must be forward stable in the sense that the
    absolute forward error must satisfy
    \begin{equation}
        \abs{\fl(\lambda) - \lambda} \leq C \kappa_2(\mathbf U) \norm{\mathbf A} \epsmach + \mathcal O(\epsmach^2),
    \end{equation}
    where $\lambda$ is an eigenvalue of $\mathbf A$, $\mathbf U$ is the eigenbasis of $\mathbf A$,
    and $C$ is a moderate constant.
\end{lemma}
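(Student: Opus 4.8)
The plan is to obtain the bound as an immediate consequence of the Bauer--Fike theorem (Theorem~\ref{thm:bauer-fike}) combined with the definition of backward stability. By hypothesis the eigenvalue algorithm is backward stable, so the computed spectrum is the \emph{exact} spectrum of a nearby matrix: each computed eigenvalue $\fl(\lambda)$ is an eigenvalue of $\mathbf A + \delta\mathbf A$ for some perturbation with $\norm{\delta\mathbf A}_2 \le C\epsmach\,\norm{\mathbf A}_2$.

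First I would invoke Theorem~\ref{thm:bauer-fike} with $p=2$. Since $\mathbf A=\mathbf U\mathbf D\mathbf U^{-1}$ is diagonalizable with eigenbasis $\mathbf U$, it provides, for each eigenvalue $\lambda$ of $\mathbf A$, an eigenvalue $\tilde\lambda$ of $\mathbf A+\delta\mathbf A$ with $|\tilde\lambda-\lambda|\le\kappa_2(\mathbf U)\,\norm{\delta\mathbf A}_2$. Chaining this with the backward-error bound gives $|\tilde\lambda-\lambda|\le C\,\kappa_2(\mathbf U)\,\norm{\mathbf A}_2\,\epsmach$, and replacing $\norm{\cdot}_2$ by any fixed norm on $\mathbb R^{3\times3}$ only rescales $C$ by the (fixed, dimension-independent, hence moderate) norm-equivalence constants. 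The $\mathcal O(\epsmach^2)$ term then absorbs any higher-order contributions hidden in the backward-error constant.

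The only step that needs care is the identification of the computed value $\fl(\lambda)$ with the ``nearby'' eigenvalue $\tilde\lambda$ furnished by Bauer--Fike: the theorem as quoted guarantees existence of a close eigenvalue of the perturbed matrix but not a canonical bijection between the two spectra, and this labelling is genuinely ambiguous when eigenvalues of $\mathbf A$ coalesce. I would resolve this by applying the estimate to all three eigenvalues simultaneously: the full computed spectrum lies within distance $C\kappa_2(\mathbf U)\norm{\mathbf A}\epsmach$ of the true spectrum, so for $\epsmach$ small enough each perturbed eigenvalue is matched to the unique nearest true eigenvalue within the stated tolerance (equivalently, one may use the symmetric form of Bauer--Fike, exchanging $\mathbf A$ and $\mathbf A+\delta\mathbf A$, whose eigenbasis differs from $\mathbf U$ by $\mathcal O(\epsmach)$). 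I expect this matching to be the main --- indeed the only nontrivial --- obstacle; the rest is a one-line chain of inequalities.
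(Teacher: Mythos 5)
Your proof follows exactly the same route as the paper: invoke backward stability to write the computed spectrum as the exact spectrum of a perturbed matrix, then apply Bauer--Fike with $p=2$ and pass to an arbitrary norm by equivalence of norms. Your extra paragraph on matching $\fl(\lambda)$ to the nearby eigenvalue $\tilde\lambda$ is a genuine piece of care that the paper glosses over (it simply writes $\fl(\lambda(\mathbf A)) = \lambda(\mathbf A + \delta\mathbf A)$ and absorbs the issue into an unexplained $\mathcal O(\norm{\delta\mathbf A})$ term), so you may keep that remark, but otherwise the arguments coincide.
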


\begin{proof}
    This is a consequence of the Bauer--Fike theorem, Theorem~\ref{thm:bauer-fike}, and the definition of backward stability,
    similar to Theorem \ref{thm:J2-fwd-stability} and Lemmas \ref{lem:J3-fwd-stability} and \ref{lem:disc-fwd-stability}.
    
    Backward stability implies that there exists a perturbation $\delta \mathbf A$ such that
    \begin{equation}
        \fl(\lambda(\mathbf A)) = \lambda(\mathbf A + \delta \mathbf A), \quad \text{with} \quad \frac{\norm{\delta \mathbf A}}{\norm{\mathbf A}} \leq C \epsmach.
    \end{equation}
    Using the Bauer--Fike theorem with, for example, $p = 2$, we have
    \begin{equation}
        \begin{aligned}
            \abs{\fl(\lambda) - \lambda} & = \abs{\lambda(\mathbf A + \delta \mathbf A) - \lambda(\mathbf A)}                        \\
                                         & \leq \kappa_2(\mathbf U) \norm{\delta \mathbf A}_2 + \mathcal{O}(\norm{\delta \mathbf A}) \\
                                         & \leq C \kappa_2(\mathbf U) \norm{\mathbf A} \epsmach + \mathcal O(\epsmach^2),
        \end{aligned}
    \end{equation}
    where norm equivalence justifies the final transition to an arbitrary matrix norm $\norm{\cdot}$.
\end{proof}

Having developed stable algorithms for the invariants $I_1, J_2, J_3,$ and $\Delta$, we can now propose an algorithm
for the eigenvalues based on the trigonometric formula Eq.~\eqref{eq:eigvals-trig} and $\arctan$ expression for the
triple-angle $\varphi$, Eq.~\eqref{eq:triple-angle-arctan}, summarized in Algorithm~\ref{alg:eigvals-stable}.

\begin{minipage}{1.0\linewidth}
    \begin{algorithm}[H]
        \begin{algorithmic}
            \Require $I_1, J_2, J_3$ and $\Delta$
            \State $t = \sqrt{27 \Delta} / (27 J_3)$ \Comment{Triple-angle argument}
            \State $\varphi = \arctan(t)$ \Comment{Triple-angle}
            \State $\lambda_k = \frac{1}{3} \left( I_1 + 2 \sqrt{3 J_2} \cos((\varphi + 2 \pi k) / 3) \right)$ for $k \in \{1, 2, 3\}$
            \State \Return $\lambda_1, \lambda_2, \lambda_3$
        \end{algorithmic}
        \caption{Evaluation of the eigenvalues}
        \label{alg:eigvals-stable}
    \end{algorithm}
\end{minipage}

\begin{figure}[htbp]
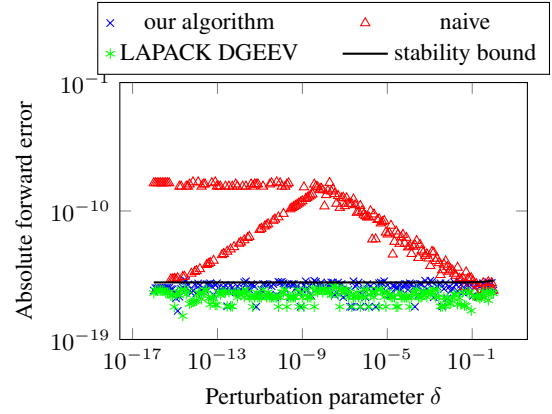
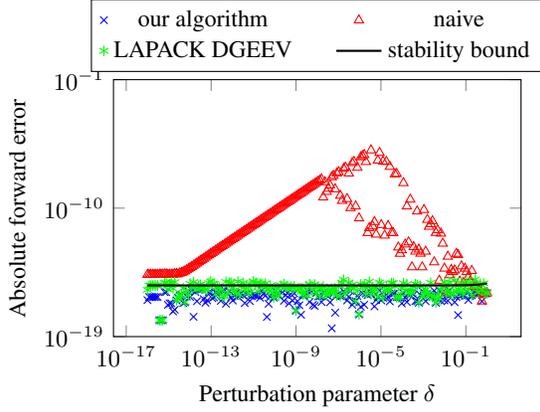
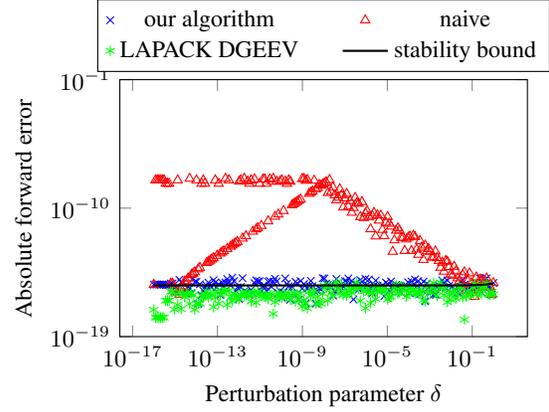
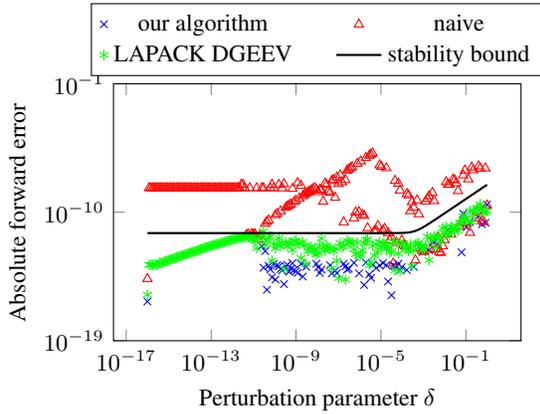
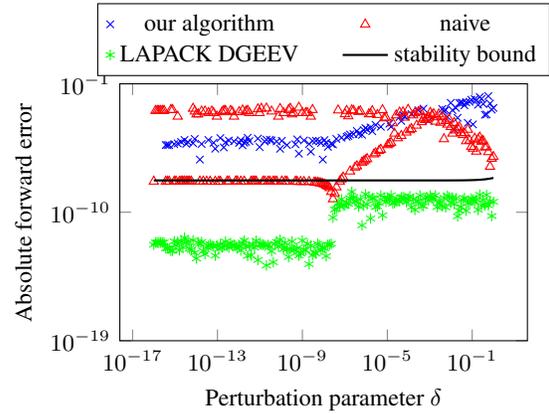

    \centering
    \begin{subfigure}[t]{0.49\linewidth}
        \centering
        \ErrorPlotEigvals{results/invariants-double_lim_J3J2-u1.dat}{eig2}{1e-19}{1e-1}
        \caption{Forward error for the benchmark case in Fig.~\ref{fig:J2-J3-benchmark-d2} with transformation matrix $\mathbf U_1$ (well-conditioned, $\kappa_2 = 2$).}
        \label{fig:eigvals-error-d2-u1}
    \end{subfigure}
    \hfill
    \begin{subfigure}[t]{0.49\linewidth}
        \centering
        \ErrorPlotEigvals{results/invariants-single_lim_disc_t-u1.dat}{eig2}{1e-19}{1e-1}
        \caption{Forward error for the benchmark case in Fig.~\ref{fig:J2-J3-benchmark-d1} with transformation matrix $\mathbf U_1$ (well-conditioned, $\kappa_2 = 2$).}
        \label{fig:eigvals-error-d1-u1}
    \end{subfigure}
    
    \begin{subfigure}[t]{0.49\linewidth}
        \ErrorPlotEigvals{results/invariants-double_lim_J3J2-symm.dat}{eig2}{1e-19}{1e-1}
        \caption{Forward error for the benchmark case in Fig.~\ref{fig:J2-J3-benchmark-d2} with transformation matrix $\mathbf U_\text{symm}$ (orthogonal, $\kappa_2 = 1$).}
        \label{fig:eigvals-error-d2-symm}
    \end{subfigure}
    \hfill
    \begin{subfigure}[t]{0.49\linewidth}
        \ErrorPlotEigvals{results/invariants-single_lim_disc_t-symm.dat}{eig2}{1e-19}{1e-1}
        \caption{Forward error for the benchmark case in Fig.~\ref{fig:J2-J3-benchmark-d1} with transformation matrix $\mathbf U_\text{symm}$ (orthogonal, $\kappa_2 = 1$).}
        \label{fig:eigvals-error-d1-symm}
    \end{subfigure}
    
    \begin{subfigure}[t]{0.49\linewidth}
        \ErrorPlotEigvals{results/invariants-double_lim_J3J2-u2.dat}{eig2}{1e-19}{1e-1}
        \caption{Forward error for the benchmark case in Fig.~\ref{fig:J2-J3-benchmark-d2} with transformation matrix $\mathbf U_2(\gamma)$ (ill-conditioned eigenbasis).}
        \label{fig:eigvals-error-d2-u2}
    \end{subfigure}
    \hfill
    \begin{subfigure}[t]{0.49\linewidth}
        \ErrorPlotEigvals{results/invariants-single_lim_disc_t-u2.dat}{eig2}{1e-19}{1e-1}
        \caption{Forward error for the benchmark case in Fig.~\ref{fig:J2-J3-benchmark-d1} with transformation matrix $\mathbf U_2(\gamma)$ (ill-conditioned eigenbasis).}
        \label{fig:eigvals-error-d1-u2}
    \end{subfigure}
    
    \caption{Numerical stability analysis for eigenvalue computation.}
    \label{fig:eigvals-analysis}
\end{figure}

\subsection{Discussion of numerical benchmarks}

Three different implementations of eigenvalue evaluation are benchmarked in Fig.~\ref{fig:eigvals-analysis}:
\textit{naive}, \textit{LAPACK DGEEV}, and \textit{present}.

\emph{Naive} approach is based on Algorithm~\ref{alg:eigvals-stable} with invariants $J_2, J_3$ and $\Delta$
computed with the naive algorithms, i.e., Algorithm~\ref{alg:J2-naive} and Algorithm~\ref{alg:J3-naive}.

\emph{LAPACK DGEEV} is based on the LAPACK library routine \texttt{DGEEV}, which computes all
eigenvalues and, optionally, the left and/or right eigenvectors of a real nonsymmetric matrix
\citep{Anderson1999Lapack}. We use the NumPy wrapper \texttt{numpy.linalg.eigvals}, see \citet{Numpy2025Eigvals},
for this routine.

The \textit{naive} approach is numerically unstable and produces forward errors as large as
$\epsmach^{1/3} \approx 10^{-5}$ for the benchmark case of a nearly triple eigenvalue in Figs.~
\ref{fig:eigvals-error-d2-u1}, \ref{fig:eigvals-error-d2-symm}, and \ref{fig:eigvals-error-d2-u2}.
For the benchmark case of a nearly double eigenvalue in Figs.~\ref{fig:eigvals-error-d1-u1},
\ref{fig:eigvals-error-d1-symm}, and \ref{fig:eigvals-error-d1-u2}, the forward error is as large
as $\epsmach^{1/2} \approx 10^{-8}$.

For well-conditioned cases with $\mathbf U = \mathbf U_1$ (Figs.~\ref{fig:eigvals-error-d2-u1} and
\ref{fig:eigvals-error-d1-u1}) and orthogonal cases with $\mathbf U = \mathbf U_\text{symm}$ (Figs.~
\ref{fig:eigvals-error-d2-symm} and \ref{fig:eigvals-error-d1-symm}), \textit{present} algorithm satisfies the
stability bound from Lemma~\ref{lem:eigvals-fwd-stability}.

\textit{LAPACK DGEEV} produces forward errors that are close to the stability bound in all benchmark cases,
even in the most challenging case of the transformation matrix being nonorthogonal and nearly
singular, $\mathbf U = \mathbf U_2(\gamma)$ (Figs.~\ref{fig:eigvals-error-d2-u2} and
\ref{fig:eigvals-error-d1-u2}). The $\gamma$ parameter was chosen as $\gamma = 10^{-3}$, which
leads to condition number $\kappa_2(\mathbf U_2) \approx 9 \times 10^3$.

\section{Performance benchmarks}

In this section, we present performance benchmarks of the proposed eigenvalue algorithm in comparison
with the numerical library LAPACK \citep{Anderson1999Lapack}. The benchmarks were executed on a MacBook Pro (2024)
with Apple M4 (ARM) CPU (10-core CPU, 120 GB/s memory bandwidth).

The benchmarks were written in C11 with LAPACK routine DGEEV called via the LAPACKE C interface version 3.12.1.
The OpenBLAS library version 0.3.29 was linked for BLAS and LAPACK functionality. This setup was run inside
a Docker container based on the official Python 3.14 Docker image \texttt{python:3.14-trixie}. The container
is pre-installed with GCC version 14.2.0. The code was compiled with optimization flags \texttt{-O3 -march=native}.

The benchmark consists of evaluating eigenvalues of an example real, diagonalizable $3 \times 3$
matrix. The matrix was generated as in Eq.~\eqref{eq:test-matrix} with transformation matrix
$\mathbf U = \mathbf U_1$ (well-conditioned case, $\kappa_2(\mathbf U_1) = 2$) and eigenvalues along
the benchmark path in Fig.~\ref{fig:J2-J3-benchmark-d2}, i.e., $\mathbf D = \diag(-1, 1, 1 + 10^{-14})$.
This is a challenging case of nearly a double eigenvalue, but both $J_2$ and $J_3$ remain finite
and away from zero. The test matrix reads explicitly as
\begin{align}
    \mathbf A = \mathbf U_1 \mathbf D \mathbf U_1^{-1} = \begin{bmatrix}
                                                             \fl(0)  & \fl(5 \cdot 10^{-15})     & \fl(1 + 5 \cdot 10^{-15}) \\
                                                             \fl(-1) & \fl(1 + 5 \cdot 10^{-15}) & \fl(1 + 5 \cdot 10^{-15}) \\
                                                             \fl(1)  & \fl(5 \cdot 10^{-15})     & \fl(5 \cdot 10^{-15})
                                                         \end{bmatrix}.
\end{align}

We performed a total of $10^6$ evaluations for the above matrix and measured
the total execution time for both the proposed algorithm and LAPACK DGEEV. The results are presented in
Table~\ref{tab:eigvals-benchmark}. The proposed algorithm is approximately ten times faster than LAPACK DGEEV
for this benchmark, while both methods returned eigenvalues with the expected absolute forward error
on the order of machine precision, i.e., approximately $10^{-16}$.

\begin{table}[htbp]
    \centering
    \caption{Performance comparison of eigenvalue computation for the test matrix $\mathbf A$ over $10^6$ evaluations.}
    \label{tab:eigvals-benchmark}
    \begin{tabular}{lcc}
        \hline
        Method            & Average time per evaluation $\pm$ std. [$\SI{}{\nano\second}$] & Fastest time per evaluation [$\SI{}{\nano\second}$] \\
        \hline
        Present algorithm & $38.2 \pm 1.2$                                                 & $35.02$                                             \\
        LAPACK DGEEV      & $396.4 \pm 4.3$                                                & $381.18$                                            \\
    \end{tabular}
\end{table}

For convenience, we also provide wrappers for Python using CFFI \citep{Rigo2025CFFI}.
Our implementation is available as part of the open-source library \texttt{eig3x3}, see \citet{Habera2025Eig3x3}.
The library \texttt{eig3x3} contains implementations of all algorithms presented in this paper, including naive
and stable algorithms for evaluating the invariants $J_2$, $J_3$, and $\Delta$, as well as eigenvalue computation
and the benchmarking code used to generate the results in this paper.
The eigenvalue algorithms are available as
\begin{itemize}
    \item \texttt{eig3x3.eigvals} for computing the eigenvalues of a real, diagonalizable
          $3 \times 3$ matrix,
    \item \texttt{eig3x3.eigvalss} for computing the eigenvalues of a real, symmetric
          $3 \times 3$ matrix.
\end{itemize}

Note that the C implementation of the proposed algorithm is not optimized for the specific
CPU architecture. Further optimizations, such as SIMD vectorization, could lead to even better
performance. On the other hand, LAPACK is a highly optimized library that benefits from years of
development and architecture-specific tuning.

The proposed algorithm is closed-form and can be inlined in performance-critical code sections,
while LAPACK routines typically involve function-call overhead. This can further widen the
performance gap in favor of the proposed algorithm in practical scenarios.

\section{Application: Evaluation of the Mohr--Coulomb yield function in mechanics}

The importance of accurate evaluation of eigenvalues of $3 \times 3$ matrices in practical applications
cannot be overstated.
In civil and mechanical engineering the prediction of material failure is based on eigenvalues of
the stress tensor, $\sigma_1, \sigma_2$ and $\sigma_3$, \citep[\S II.2]{Hill1998Plasticity}.
For rigid body dynamics, the analysis
of stability of rotation is based on eigenvalues and eigenvectors of the inertia tensor
\citep[Eq. 5.29]{Goldstein2001Mechanics}. For medical
imaging, diffusion tensor imaging (DTI) relies on eigenvalues of the diffusion tensor to characterize
the anisotropic diffusion of water molecules in biological tissues (indicates e.g., damage to the tissue
due to injury or disease), \citep[Eq. 10]{Basser1994Diffusion}. Another important application, especially
for nonsymmetric matrices, is the analysis of fluid flows and stability of vortices, \citep{Chong1990Flow}.

In this section, we demonstrate the practical impact of numerical stability in eigenvalue computation
by evaluating the Mohr--Coulomb yield function, a widely used model in geotechnical engineering and
soil mechanics to predict the onset of plastic deformation in materials such as soil and rock.
The Mohr--Coulomb yield function is defined as (see \citet[\S 3.3.3.]{Lubliner2008Plasticity}
and \citet[\S 2.3.3.]{Chen2007Plasticity})
\begin{equation}
    f(\bm{\sigma}) = \frac{1}{S_{yc}}\frac{m + 1}{2} \max_{i < j} \bigl( \abs{\sigma_i - \sigma_j} + K (\sigma_i + \sigma_j) \bigr) - 1,
    \label{eq:mohr-coulomb-yield}
\end{equation}
where $\sigma_1, \sigma_2$ and $\sigma_3$ are the principal stresses (eigenvalues of the stress tensor
$\bm{\sigma} \in \mathbb R^{3 \times 3}$), $S_{yc}$ is the uniaxial compressive yield stress,
$m$ is the ratio of the uniaxial tensile yield stress $S_{yt}$ to the uniaxial compressive yield stress,
$m = S_{yc} / S_{yt}$, and $K = (m - 1) / (m + 1)$ is a material parameter related to the internal friction angle.
In this example we take $S_{yc} = \SI{100}{\mega\pascal}$ and $S_{yt} = \SI{10}{\mega\pascal}$ (i.e., $m = 10$).

The material behaves elastically when $f(\bm{\sigma}) < 0$ and yields when $f(\bm{\sigma}) = 0$,
indicating the onset of plastic deformation. Moreover, the gradient of the yield function with respect to the stress tensor,
$\partial f / \partial \bm{\sigma}$, is essential for numerical algorithms in computational plasticity,
such as return mapping algorithms, which are used to update the stress state during plastic deformation.
Accurate evaluation of the yield function is crucial for predicting material failure and designing safe structures.

We evaluate the absolute forward error in computing the Mohr--Coulomb yield function
\begin{equation}
    \abs{\fl(f(\bm{\sigma})) - f(\bm{\sigma})}
\end{equation}
using the eigenvalue Algorithm~\ref{alg:eigvals-stable} with naive invariant computations
(Algorithms~\ref{alg:J2-naive}, and \ref{alg:J3-naive}) and using stable invariant
computations (Algorithms~\ref{alg:J2-stable}, \ref{alg:J3-stable}, and \ref{alg:disc-stable}).

Results for the forward error over a range of stress states are shown in
Fig.~\ref{fig:mohr-coulomb-error}. The figure shows a polar plot in the
so called $\pi$-plane, which is commonly used in soil mechanics.
The $\pi$-plane is defined as the plane orthogonal to the hydrostatic axis in the principal stress space,
which is the axis where all three principal stresses are equal, i.e., $\sigma_1 = \sigma_2 = \sigma_3$.
The radial axis is logarithmic, with the center corresponding to an error of $10^{-16}$.
The angular coordinate represents the Lode angle $\theta$, related to the triple-angle defined earlier as
$\theta = \varphi / 3$. The evaluation is performed for a stress state constructed as
\begin{equation}
    \bm{\sigma} = \mathbf U_\text{symm} \diag(\sigma_1, \sigma_2, \sigma_3) \mathbf U_\text{symm}^\T,
\end{equation}
where $\mathbf U_\text{symm}$ is the orthogonal matrix defined in Eq.~\eqref{eq:transformation-matrices},
and the principal stresses are computed from the Lode angle $\theta$ by
\begin{equation}
    \sigma_k = \frac{2}{3} \sqrt{3 J_2} \cos \left( \theta + \frac{2 \pi k}{3} \right), \quad k \in \{1, 2, 3\},
\end{equation}
where $J_2 = \SI{150}{\mega\pascal\squared}$ is a constant scalar and $\theta \in [0, 2\pi)$ is
the angle in the $\pi$-plane.

\def\centerLog{-16}

\begin{figure}[htbp]
    \centering
    \begin{subfigure}[t]{0.49\linewidth}
        \begin{tikzpicture}
            \begin{polaraxis}[
                    width=7cm,
                    height=7cm,
                    grid=both,
                    tick label style={font=\small},
                    label style={font=\small},
                    ymin=\centerLog,
                    ymax=0,
                    xlabel={Lode angle $\theta$},
                    y coord trafo/.code={
                            \pgfmathparse{#1 - \centerLog}
                        },
                    y coord inv trafo/.code={
                            \pgfmathparse{#1 + \centerLog}
                        },
                    legend style={font=\small, at={(0.5,1.15)}, anchor=south, legend columns=2, /tikz/every even column/.append style={column sep=0.5cm}},
                ]
                \addplot+ [data cs=polarrad, red, only marks,mark=triangle, mark size=2pt] table [x=angle,y=error_naive] {results/mohr-coulomb-error.dat};
                \addlegendentry{abs. fwd. error}
                \addplot [data cs=polarrad, black, mark=none, thick] table [x=angle,y=yield_surface] {results/mohr-coulomb-error.dat};
                \addlegendentry{$\abs{f(\bm{\sigma})}$}
            \end{polaraxis}
        \end{tikzpicture}
        \caption{Naive implementation}
    \end{subfigure}
    \hfill
    \begin{subfigure}[t]{0.49\linewidth}
        \begin{tikzpicture}
            \begin{polaraxis}[
                    width=7cm,
                    height=7cm,
                    grid=both,
                    tick label style={font=\small},
                    label style={font=\small},
                    ymin=\centerLog,
                    ymax=0, 
                    xlabel={Lode angle $\theta$},
                    y coord trafo/.code={
                            \pgfmathparse{#1 - \centerLog}
                        },
                    y coord inv trafo/.code={
                            \pgfmathparse{#1 + \centerLog}
                        },
                    legend style={font=\small, at={(0.5,1.15)}, anchor=south, legend columns=2, /tikz/every even column/.append style={column sep=0.5cm}},
                ]
                \addplot+ [data cs=polarrad, blue, only marks,mark=x, mark size=2pt] table [x=angle,y=error_stable] {results/mohr-coulomb-error.dat};
                \addlegendentry{abs. fwd. error}
                \addplot [data cs=polarrad, black, mark=none, thick] table [x=angle,y=yield_surface] {results/mohr-coulomb-error.dat};
                \addlegendentry{$\abs{f(\bm{\sigma})}$}
            \end{polaraxis}
        \end{tikzpicture}
        \caption{Present algorithm}
    \end{subfigure}
    \caption{Absolute forward error in evaluating the Mohr--Coulomb yield function
        using naive and present eigenvalue algorithms over a range of stress states. The radial axis is
        logarithmic, with the center corresponds to an absolute error of $10^{-16}$. The absolute value
        of the yield function itself is also plotted (black line).}
    \label{fig:mohr-coulomb-error}
\end{figure}

Figure \ref{fig:mohr-coulomb-error} shows that the naive implementation produces large errors, up to
$\epsmach^{1/2} \approx 10^{-8}$ in accordance with the analysis in
Section~\ref{sec:eigvals-stability}. The error is particularly large near Lode angles
$\theta = \{0, 60, 120, 180, 240, 300\}^\circ$, which correspond to stress states with two coalescing eigenvalues,
or equivalently the vanishing discriminant, $\Delta = 0$. In contrast, the present stable algorithm
maintains an absolute error close to machine precision across the entire range of stress states.
Absolute value of the yield function is also included in Fig.~\ref{fig:mohr-coulomb-error} as a
black line. As can be seen in the figure, the yield function approaches zero near another set of
Lode angles. In this case, the relative error of both algorithms would increase, since the stability
analysis from the Lemma~\ref{lem:eigvals-fwd-stability} only provides an absolute error bound.
Development of algorithms for yield surfaces with guaranteed relative error bounds remains a topic
for future research.

\section{Conclusion}

In this work, we have presented a detailed numerical stability analysis of closed-form expressions
for the eigenvalues of $3 \times 3$ real matrices. We have focused on the computation of four key
invariants: the trace $I_1$, the deviatoric invariants $J_2$ and $J_3$, and the discriminant
$\Delta$. For each invariant, we derived forward error bounds and proposed specialized algorithms
designed to be stable, particularly in the challenging cases of coalescing eigenvalues.

Our analysis and numerical benchmarks demonstrate that the proposed algorithm for the invariant
$J_2$ is accurate, satisfying the derived stability bounds even for matrices with ill-conditioned
eigenbases. The algorithms for $J_3$ and the discriminant $\Delta$, however, are stable for matrices
with well-conditioned or orthogonal eigenbases but their accuracy degrades significantly for
matrices with ill-conditioned eigenbases, where they fail to meet the theoretical stability bounds.
The final eigenvalue computation, which relies on these invariants, inherits their stability
characteristics. Consequently, the proposed closed-form solution is numerically stable and accurate
for matrices with a well-conditioned eigenbasis, significantly outperforming naive implementations.
For the most challenging cases involving ill-conditioned eigenbases, the established iterative
library routine LAPACK DGEEV provides more accurate results.

Performance benchmarks show that the proposed closed-form algorithm is approximately ten times
faster than the highly optimized LAPACK implementation for a challenging test case with nearly
double eigenvalue. This highlights the potential of closed-form solutions in performance-critical
applications, especially considering that our implementation is not fully optimized and could be
inlined to avoid function call overhead.

The header-only open-source library \texttt{eig3x3} \citep{Habera2025Eig3x3} provides the C implementations of
the proposed algorithms with a thin Python interface (via CFFI), including eigenvalue routines for
both general and symmetric $3 \times 3$ matrices: \texttt{eig3x3.eigvals} (real, diagonalizable) and
\texttt{eig3x3.eigvalss} (real, symmetric). The repository includes naive and stable variants for
$J_2$, $J_3$, and $\Delta$, together with build scripts and benchmarks to reproduce the
results reported here.

Finally, we demonstrated the practical impact of numerical stability in the context of the
Mohr--Coulomb yield function in mechanics. The proposed algorithm computes the yield function with
errors close to machine precision across all stress states, whereas naive methods introduce
significant errors near coalescing eigenvalues.

Future work should focus on proving the forward stability of invariants $J_3$ and $\Delta$ (and
consequently the eigenvalues), which we currently only observe empirically for well-conditioned
eigenbases. Further performance gains could be achieved by architecture-specific optimizations, such
as vectorization. In conclusion, while iterative methods remain the gold standard for accuracy in
the most ill-conditioned problems, our work provides a robust and efficient closed-form alternative
for a large and practical class of matrices.

\section*{Declarations}

This version of the article has been accepted for publication, after peer review and is subject to
Springer Nature's AM terms of use, but is not the Version of Record and does not reflect post-acceptance
improvements, or any corrections. The Version of Record is available online at: http://dx.doi.org/10.1007/s11075-026-02328-5

\bibliographystyle{unsrtnat}
\bibliography{bibliography}

\begin{thebibliography}{29}
\providecommand{\natexlab}[1]{#1}
\providecommand{\url}[1]{\texttt{#1}}
\expandafter\ifx\csname urlstyle\endcsname\relax
  \providecommand{\doi}[1]{doi: #1}\else
  \providecommand{\doi}{doi: \begingroup \urlstyle{rm}\Url}\fi

\bibitem[Smith(1961)]{Smith1961Eigenvalues}
Oliver~K Smith.
\newblock Eigenvalues of a symmetric 3$\times$ 3 matrix.
\newblock \emph{Communications of the ACM}, 4\penalty0 (4):\penalty0 168, 1961.

\bibitem[Bronshtein et~al.(2015)Bronshtein, Semendyayev, Musiol, and M\"{u}hlig]{Bronshtein2015Handbook}
I.N. Bronshtein, K.A. Semendyayev, Gerhard Musiol, and Heiner M\"{u}hlig.
\newblock \emph{Handbook of Mathematics}.
\newblock Springer, Berlin, Heidelberg, 2015.
\newblock ISBN 9783662462218.
\newblock \doi{10.1007/978-3-662-46221-8}.
\newblock URL \url{http://dx.doi.org/10.1007/978-3-662-46221-8}.

\bibitem[Press et~al.(2007)Press, Teukolsky, Vetterling, and Flannery]{Press2007Recipes}
William~H. Press, Saul~A. Teukolsky, William~T. Vetterling, and Brian~P. Flannery.
\newblock \emph{Numerical Recipes 3rd Edition: The Art of Scientific Computing}.
\newblock Cambridge University Press, USA, 3 edition, 2007.
\newblock ISBN 0521880688.

\bibitem[Trefethen and Bau(1997)]{Trefethen1997Numerical}
Lloyd~N. Trefethen and David Bau.
\newblock \emph{Numerical Linear Algebra}.
\newblock Society for Industrial and Applied Mathematics, Philadelphia, PA, January 1997.
\newblock ISBN 9780898719574.
\newblock \doi{10.1137/1.9780898719574}.
\newblock URL \url{http://dx.doi.org/10.1137/1.9780898719574}.

\bibitem[Higham(2002)]{Higham2002Accuracy}
Nicholas~J. Higham.
\newblock \emph{Accuracy and Stability of Numerical Algorithms}.
\newblock Society for Industrial and Applied Mathematics, Philadelphia, PA, January 2002.
\newblock ISBN 9780898718027.
\newblock \doi{10.1137/1.9780898718027}.
\newblock URL \url{http://dx.doi.org/10.1137/1.9780898718027}.

\bibitem[Blinn(2007)]{Blinn2007Howto}
James~F. Blinn.
\newblock How to solve a cubic equation, part 5: Back to numerics.
\newblock \emph{IEEE Computer Graphics and Applications}, 27\penalty0 (3):\penalty0 78–89, May 2007.
\newblock ISSN 1558-1756.
\newblock \doi{10.1109/mcg.2007.60}.
\newblock URL \url{http://dx.doi.org/10.1109/MCG.2007.60}.

\bibitem[La~Porte(1973)]{LaPorte1973Formulation}
M~La~Porte.
\newblock Une formulation num{\'e}riquement stable donnant les racines r{\'e}elles de l’{\'e}quation du 3’eme degr{\'e}.
\newblock \emph{IFP Report}, 21516, 1973.

\bibitem[Scherzinger and Dohrmann(2008)]{Scherzinger2008Robust}
W.M. Scherzinger and C.R. Dohrmann.
\newblock A robust algorithm for finding the eigenvalues and eigenvectors of 3×3 symmetric matrices.
\newblock \emph{Computer Methods in Applied Mechanics and Engineering}, 197\penalty0 (45–48):\penalty0 4007–4015, August 2008.
\newblock ISSN 0045-7825.
\newblock \doi{10.1016/j.cma.2008.03.031}.
\newblock URL \url{http://dx.doi.org/10.1016/j.cma.2008.03.031}.

\bibitem[Habera and Zilian(2021)]{Habera2021Symbolic}
Michal Habera and Andreas Zilian.
\newblock Symbolic spectral decomposition of 3x3 matrices, 2021.
\newblock URL \url{https://arxiv.org/abs/2111.02117}.

\bibitem[Parlett(2002)]{Parlett2002Discriminant}
Beresford~N. Parlett.
\newblock The (matrix) discriminant as a determinant.
\newblock \emph{Linear Algebra and its Applications}, 355\penalty0 (1–3):\penalty0 85–101, November 2002.
\newblock ISSN 0024-3795.
\newblock \doi{10.1016/s0024-3795(02)00335-x}.
\newblock URL \url{http://dx.doi.org/10.1016/S0024-3795(02)00335-X}.

\bibitem[Harari and Albocher(2022)]{Harari2022Computation}
Isaac Harari and Uri Albocher.
\newblock Computation of eigenvalues of a real, symmetric 3 × 3 matrix with particular reference to the pernicious case of two nearly equal eigenvalues.
\newblock \emph{International Journal for Numerical Methods in Engineering}, 124\penalty0 (5):\penalty0 1089–1110, November 2022.
\newblock ISSN 1097-0207.
\newblock \doi{10.1002/nme.7153}.
\newblock URL \url{http://dx.doi.org/10.1002/nme.7153}.

\bibitem[Harari and Albocher(2023)]{Harari2023Using}
Isaac Harari and Uri Albocher.
\newblock Using the discriminant in a numerically stable symmetric 3 × 3 direct eigenvalue solver.
\newblock \emph{International Journal for Numerical Methods in Engineering}, 124\penalty0 (20):\penalty0 4473–4489, July 2023.
\newblock ISSN 1097-0207.
\newblock \doi{10.1002/nme.7311}.
\newblock URL \url{http://dx.doi.org/10.1002/nme.7311}.

\bibitem[Anderson et~al.(1999)Anderson, Bai, Bischof, Blackford, Demmel, Dongarra, Du~Croz, Greenbaum, Hammarling, McKenney, and Sorensen]{Anderson1999Lapack}
E.~Anderson, Z.~Bai, C.~Bischof, S.~Blackford, J.~Demmel, J.~Dongarra, J.~Du~Croz, A.~Greenbaum, S.~Hammarling, A.~McKenney, and D.~Sorensen.
\newblock \emph{{LAPACK} Users' Guide}.
\newblock Society for Industrial and Applied Mathematics, Philadelphia, PA, third edition, 1999.
\newblock ISBN 0-89871-447-8 (paperback).

\bibitem[Habera and Zilian(2025)]{Habera2025Eig3x3}
Michal Habera and Andreas Zilian.
\newblock Eigenvalues of 3x3 matrices, 2025.
\newblock URL \url{https://github.com/michalhabera/eig3x3}.

\bibitem[Rigo and Fijalkowski(2025)]{Rigo2025CFFI}
Armin Rigo and Maciej Fijalkowski.
\newblock Cffi documentation, 2025.
\newblock URL \url{https://cffi.readthedocs.io/en/stable/}.
\newblock C Foreign Function Interface for Python. Version 2.0.0.

\bibitem[Harris et~al.(2020)Harris, Millman, van~der Walt, Gommers, Virtanen, Cournapeau, Wieser, Taylor, Berg, Smith, Kern, Picus, Hoyer, van Kerkwijk, Brett, Haldane, del R{\'{i}}o, Wiebe, Peterson, G{\'{e}}rard-Marchant, Sheppard, Reddy, Weckesser, Abbasi, Gohlke, and Oliphant]{Harris2020Array}
Charles~R. Harris, K.~Jarrod Millman, St{\'{e}}fan~J. van~der Walt, Ralf Gommers, Pauli Virtanen, David Cournapeau, Eric Wieser, Julian Taylor, Sebastian Berg, Nathaniel~J. Smith, Robert Kern, Matti Picus, Stephan Hoyer, Marten~H. van Kerkwijk, Matthew Brett, Allan Haldane, Jaime~Fern{\'{a}}ndez del R{\'{i}}o, Mark Wiebe, Pearu Peterson, Pierre G{\'{e}}rard-Marchant, Kevin Sheppard, Tyler Reddy, Warren Weckesser, Hameer Abbasi, Christoph Gohlke, and Travis~E. Oliphant.
\newblock Array programming with {NumPy}.
\newblock \emph{Nature}, 585\penalty0 (7825):\penalty0 357--362, September 2020.
\newblock \doi{10.1038/s41586-020-2649-2}.
\newblock URL \url{https://doi.org/10.1038/s41586-020-2649-2}.

\bibitem[Fredrik~Johansson(2023)]{Mpmath2023Python}
et~al. Fredrik~Johansson.
\newblock \emph{mpmath: a Python library for arbitrary-precision floating-point arithmetic (version 1.3.0)}, 2023.
\newblock http://mpmath.org/.

\bibitem[{NumPy}(2025{\natexlab{a}})]{Numpy2025Trace}
{NumPy}.
\newblock numpy.trace, 2025{\natexlab{a}}.
\newblock URL \url{https://numpy.org/doc/2.3/reference/generated/numpy.trace.html}.
\newblock NumPy v2.3 Manual.

\bibitem[{NumPy}(2025{\natexlab{b}})]{Numpy2025Matmul}
{NumPy}.
\newblock numpy.matmul, 2025{\natexlab{b}}.
\newblock URL \url{https://numpy.org/doc/2.3/reference/generated/numpy.matmul.html}.
\newblock NumPy v2.3 Manual.

\bibitem[{NumPy}(2025{\natexlab{c}})]{Numpy2025Det}
{NumPy}.
\newblock numpy.linalg.det, 2025{\natexlab{c}}.
\newblock URL \url{https://numpy.org/doc/2.3/reference/generated/numpy.linalg.det.html}.
\newblock NumPy v2.3 Manual.

\bibitem[Bauer and Fike(1960)]{Bauer1960Norms}
F.~L. Bauer and C.~T. Fike.
\newblock Norms and exclusion theorems.
\newblock \emph{Numerische Mathematik}, 2\penalty0 (1):\penalty0 137–141, December 1960.
\newblock ISSN 0945-3245.
\newblock \doi{10.1007/bf01386217}.
\newblock URL \url{http://dx.doi.org/10.1007/BF01386217}.

\bibitem[Golub and Van~Loan(2013)]{Golub2013Matrix}
Gene~H. Golub and Charles~F. Van~Loan.
\newblock \emph{Matrix Computations - 4th Edition}.
\newblock Johns Hopkins University Press, Philadelphia, PA, 2013.
\newblock \doi{10.1137/1.9781421407944}.
\newblock URL \url{https://epubs.siam.org/doi/abs/10.1137/1.9781421407944}.

\bibitem[{NumPy}(2025{\natexlab{d}})]{Numpy2025Eigvals}
{NumPy}.
\newblock numpy.linalg.eigvals, 2025{\natexlab{d}}.
\newblock URL \url{https://numpy.org/doc/2.3/reference/generated/numpy.linalg.eigvals.html}.
\newblock NumPy v2.3 Manual.

\bibitem[Hill(1998)]{Hill1998Plasticity}
R~Hill.
\newblock \emph{The Mathematical Theory Of Plasticity}.
\newblock Oxford University Press, Oxford, August 1998.
\newblock ISBN 9781383020625.
\newblock \doi{10.1093/oso/9780198503675.001.0001}.
\newblock URL \url{http://dx.doi.org/10.1093/oso/9780198503675.001.0001}.

\bibitem[Goldstein et~al.(2001)Goldstein, Poole, and Safko]{Goldstein2001Mechanics}
Herbert Goldstein, Charles~P Poole, and John~L Safko.
\newblock \emph{Classical Mechanics}.
\newblock Pearson, Upper Saddle River, NJ, 3 edition, June 2001.

\bibitem[Basser et~al.(1994)Basser, Mattiello, and LeBihan]{Basser1994Diffusion}
P.J. Basser, J.~Mattiello, and D.~LeBihan.
\newblock Mr diffusion tensor spectroscopy and imaging.
\newblock \emph{Biophysical Journal}, 66\penalty0 (1):\penalty0 259–267, January 1994.
\newblock ISSN 0006-3495.
\newblock \doi{10.1016/s0006-3495(94)80775-1}.
\newblock URL \url{http://dx.doi.org/10.1016/s0006-3495(94)80775-1}.

\bibitem[Chong et~al.(1990)Chong, Perry, and Cantwell]{Chong1990Flow}
M.~S. Chong, A.~E. Perry, and B.~J. Cantwell.
\newblock A general classification of three‐dimensional flow fields.
\newblock \emph{Physics of Fluids A: Fluid Dynamics}, 2\penalty0 (5):\penalty0 765--777, 05 1990.
\newblock ISSN 0899-8213.
\newblock \doi{10.1063/1.857730}.
\newblock URL \url{https://doi.org/10.1063/1.857730}.

\bibitem[Lubliner(2008)]{Lubliner2008Plasticity}
J.~Lubliner.
\newblock \emph{Plasticity Theory}.
\newblock Dover books on engineering. Dover Publications, Mineola, NY, 2008.
\newblock ISBN 9780486462905.

\bibitem[Chen et~al.(2007)Chen, Han, and Han]{Chen2007Plasticity}
W.F. Chen, D.J. Han, and D.J. Han.
\newblock \emph{Plasticity for Structural Engineers}.
\newblock J. Ross Publishing Classics. J. Ross Pub., Plantation, FL, 2007.
\newblock ISBN 9781932159752.

\end{thebibliography}

\end{document}